\renewcommand{\geq}{\geqslant}
\renewcommand{\leq}{\leqslant}
\newcommand{\co}{\operatorname{co}}
\newcommand{\cconv}{\overline{\co}}
\newtheorem{thm}{Theorem}[section]
\newtheorem{prop}[thm]{Proposition}
\newtheorem{coro}[thm]{Corollary}
\newtheorem{lema}[thm]{Lemma}
\theoremstyle{definition}
\newtheorem{defi}[thm]{Definition}
\newtheorem{rema}[thm]{Remark}
\numberwithin{equation}{section}
\def\fnote#1{\footnote}
\def\ignora#1{}
\def\n3#1{\left\vert  \! \left\vert \! \left\vert \, #1 \, \right\vert \!
  \right\vert \! \right\vert }
\begin{document}

\title[]{Ergodicity and super weak compactness}

\author[G. Grelier]{ Guillaume Grelier }
\author[M. Raja]{ Matías Raja }
\address[G. Grelier]{Universidad de Murcia, Departamento de Matem\'aticas, Campus de Espinardo 30100 Murcia, Spain} \email{g.grelier@um.es}
\address[M. Raja]{Universidad de Murcia, Departamento de Matem\'aticas, Campus de Espinardo 30100 Murcia, Spain} \email{matias@um.es}

\keywords{}

\maketitle

\begin{abstract}
We prove that a closed convex subset of a Banach space is (super-)weakly compact if and only if it is (super)-ergodic. As a consequence we deduce that super weakly compact sets are characterized by the fixed point property for continuous affine mappings. We also prove that the M-(fixed point property for affine isometries) implies the Banach-Saks property.
\end{abstract}

\section{Introduction}

The notion of \textit{super weakly compact} set was introduced in \cite{Raja} for convex sets under the name \textit{finitely dentable} set. The name super weakly compact is used for the first time in \cite{SWC10}.  Since then, this notion has been deeply studied. For example, the second named author proved in \cite{S2WCG} that a Banach space is super weakly compact generated if and only if it admits a strongly uniformly G\^ateaux smooth norm. Another renorming result from \cite{S2WCG}, quoted here as Theorem \ref{unif_convex_renorm}, is one of the main ingredients of this paper since it implies that convex super weakly compact sets can be renormed to have normal structure, which is a notion strongly related to fixed point properties (see \cite{Kirk}).\\

In \cite{SWC}, the authors proved that a closed bounded convex subset $K$ of a Banach space $X$ is super weakly compact if and only is it has the super fixed point property for affine isometries $T:X\to X$ preserving $K$. In this paper, we improve this characterization in two different ways. On the one hand, we prove that a closed convex set $K$ is super weakly compact if and only if it is super-ergodic (Theorem \ref{super-ergodic}), thanks to an adaptation of the classical mean ergodic theorem (Theorem \ref{ergo}). The fixed point properties then follow easily using the same techniques. On the other hand, the main drawback of the caracterization given in \cite{SWC} is that it can exist affine isometries defined on $K$ which can not be extended to the all space. Moreover, the super weak compactness is a localized version of superreflexivity and it is natural to expect that such a characterization only depends on $K$. This is done is Theorem \ref{SWC_FPP}. \\

The structure of the paper is the following. The next section deals with the several notions that will be use throughout this paper. We start recalling the definition of spreading models and we study when the shift of a spreading model has a fixed point when it is restricted to the closed convex hull of its spreading basis (see Proposition \ref{shift_fixedpoint}). Next, we introduce a definition of finite representability for sets that generalizes the usual definition for Banach spaces. Finally, we recall the notion of super weak compactness.
In the third section, we characterize the (super-)weak compactness in terms of (super-)ergodicity (Theorems \ref{weakly_compact} and \ref{super-ergodic}). For that, we establish a version of the mean ergodic theorem (see Theorem \ref{ergo}).
The fixed point properties of super weakly compact sets are obtained in the fourth section as a consequence of their ergodic properties (see Theorem \ref{SWC_FPP}).
In the fifth part, we apply the results established in the previous sections to strongly super weakly compactly generated Banach spaces. In fact, any weakly compact set is super weakly compact in these spaces and we can obtain stronger results.
In the last section, we prove that if all spreading models of a Banach space $X$ have the fixed point property for affine isometries then $X$ is reflexive and in fact has the Banach-Saks property (Theorem \ref{M-FPP}). It follows that the reflexivity strictly lies bewteen the fixed point property and the M-(fixed point property) (see Definition \ref{M-property}).

\section{Basic Notions}

\subsection{Notation}

In general, our notation is standard and follows textbooks such as \cite{BST}. For example, if $X$ is a Banach space, $B_X$ denotes its closed unit ball and the diameter of a bounded set $A\subset X$ is denoted by $\text{diam}(A)$. If $X$ is a Banach space and $\mathcal U$ is an ultrafilter, the ultrapower of $X$ with respect to $\mathcal U$ is denoted by $X^\mathcal U$. We refer the reader to \cite{Heinrich} for more informations about ultrapowers. 

If $(\mathcal P)$ is a property of Banach spaces, we say that a Banach space $X$
has the property super-$(\mathcal P)$ if any ultrapower of $X$ has $(\mathcal P)$. If $(\mathcal P)$ is hereditary, this is equivalent to the fact that any Banach space which is finitely representable in $X$ has $(\mathcal P)$.

\subsection{Spreading model}

In this part, we recall the definition of spreading models initially introduced by Brunel and Sucheston in \cite{spreading}. We refer the reader to \cite{Beauzamy} for a great presentation of spreading models.\\

Let $X$ be a Banach space and let $(x_n)_{n\in\mathbb N}$ be a sequence in $X$. We said that $(x_n)_{n\in\mathbb N}$ is a \textit{good} sequence if $$\lim_{n_1\to\infty}\|a_1x_{n_1}+...+a_1x_{n_k}\|,$$ where $n_1<...<n_k$, exists for all $k\in\mathbb N$ and all $a_1,...,a_k\in\mathbb R$. Using a theorem of Ramsey, one can prove that every bounded sequence has a good subsequence.  If $(x_n)_{n\in\mathbb N}$ is a good sequence, $\|(a_1,...,a_k)\|=\lim_{n_1\to\infty}\|a_1x_{n_1}+...+a_kx_{n_k}\|$, with $n_1<...<n_k$, defines a semi-norm on $c_{00}$. It is easily seen that it defines a norm if and only if $(x_n)_{n\in\mathbb N}$ is not convergent. In this case, the completion $Z$ of $c_{00}$ with this new norm is called \textit{spreading model} of $X$ built on $(x_n)_{n\in\mathbb N}$. We say that $(e_n)_{n\in\mathbb N}$ (where $(e_n)_{n\in\mathbb N}$ is the canonic basis of $c_{00}$) is the fundamental sequence of the spreading model. It can be proved that $Z$ is finitely reprensentable in $X$. More precisely, one has that:
$$\forall\varepsilon>0\ \ \forall N\geq 1\ \ \exists p\in\mathbb N\ \ \forall n_1<...<n_N\ \ \text{with}\ \ n_1\geq p\ \ \forall a_1,...,a_N\in\mathbb R$$
\begin{equation}
(1-\varepsilon)\left\|\sum_{i=1}^N a_ix_{n_i}\right\|\leq\left\|\sum_{i=1}^N a_ie_i\right\|\leq (1+\varepsilon)\left\|\sum_{i=1}^N a_ix_{n_i}\right\|
\label{fr}
\end{equation}
\\

\noindent A non-constant sequence $(e_n)_{n\in\mathbb N}$ of a Banach space $X$ is said to be spreading if $$\left\|\sum_{i=1}^n a_ie_i\right\|=\left\|\sum_{i=1}^n a_ie_{n_i}\right\|$$ for all $k\in\mathbb N$, all $a_1,...,a_k\in\mathbb R$ and all $n_1<...<n_k$. By construction of the norm of a spreading model, its fundamental sequence is spreading. 

Let $Z$ be a spreading model of a Banach space $X$ built on a bounded good sequence $(x_n)_n$ with fundamental sequence $(e_n)_n$. Following ideas of Brunel and Sucheston (see \cite{Brunel}), we can define a linear isometry $T:c_{00}\to Z$ by $$T\left(\sum_{i\geq 1}a_ie_i\right)=\sum_{i\geq 2}a_{i-1}e_i.$$ Then $T$ extends to a linear isometry from $Z$ to $Z$. It is clear that $T(\cconv\{e_n\}_n)\subset \cconv\{e_n\}_n$. Along this document, we will always refer to that mapping as \textit{the shift of the spreading model} $Z$.

\begin{prop}\label{shift_fixedpoint}
Let $X$ be a Banach space and let $Z$ be a spreading model of $X$ with fundamental sequence $(e_n)_n$. Let $T$ be the shift of $Z$. The following assertions are equivalent:
\begin{enumerate}
    \item[(i)] $T|_{\cconv\{e_n\}_n}$ has a fixed point;
    \item[(ii)] $(e_n)_n$ is weakly convergent.
\end{enumerate}
\end{prop}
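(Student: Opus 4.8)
The plan is to treat the two implications separately, with (ii)$\Rightarrow$(i) being essentially immediate and (i)$\Rightarrow$(ii) carrying all the weight. Throughout write $C:=\cconv\{e_n\}_n$. For (ii)$\Rightarrow$(i), suppose $(e_n)_n$ converges weakly to some $e$. Since $C$ is convex and norm-closed it is weakly closed, so $e\in C$. As $T$ is a bounded linear operator it is weak-to-weak continuous, whence $e_{n+1}=Te_n\rightharpoonup Te$; but also $e_{n+1}\rightharpoonup e$, so $Te=e$, giving the desired fixed point in $C$.

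For (i)$\Rightarrow$(ii), the starting point is a reformulation of the fixed-point condition. If $Tz=z$ with $z\in C$, then for every $k$ one has $z=T^kz\in T^k(C)\subseteq\cconv\{e_m:m>k\}$, using $T^ke_n=e_{n+k}$ and continuity of $T^k$; hence $z\in\bigcap_{k\geq1}\cconv\{e_m:m>k\}$. I would then split according to the classical structural dichotomy for spreading sequences (Brunel--Sucheston): a spreading sequence is either weakly Cauchy or equivalent to the unit vector basis of $\ell_1$. In the weakly Cauchy case, for every $f\in Z^*$ the limit $\lim_m f(e_m)$ exists, and $z\in\cconv\{e_m:m>k\}$ forces $\inf_{m>k}f(e_m)\leq f(z)\leq\sup_{m>k}f(e_m)$; letting $k\to\infty$ yields $f(z)=\lim_m f(e_m)$ for all $f$, which is exactly $e_n\rightharpoonup z$. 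This already gives (ii), and it incidentally absorbs the summing-basis-type sequences (weakly Cauchy but not weakly convergent), for which the common ``limit'' lives in $Z^{**}\setminus Z$ and the intersection above is therefore empty, so no fixed point can exist.

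In the $\ell_1$ case $(e_n)_n$ is basic, its coordinate functionals $e_n^*$ are bounded, and the coefficient-sum functional $\phi(\sum a_ie_i):=\sum a_i$ is bounded, since $\lvert\sum a_i\rvert\leq\sum\lvert a_i\rvert\leq A^{-1}\lVert\sum a_ie_i\rVert$ for the equivalence constant $A>0$. For a point $z$ in the intersection, fixing $j$ and taking $k\geq j$ shows that $e_j^*$ vanishes on $\{e_m:m>k\}$, hence on $\cconv\{e_m:m>k\}$, so $e_j^*(z)=0$ for every $j$ and thus $z=0$; but $\phi\equiv1$ on each tail hull forces $\phi(z)=1$, a contradiction. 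Hence this case cannot support a fixed point, and the existence of one forces the weakly Cauchy alternative, completing (i)$\Rightarrow$(ii).

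The main obstacle I anticipate is the dichotomy itself, specifically verifying that a spreading sequence failing to be weakly Cauchy is genuinely $\ell_1$-equivalent. The natural route is through the fact that the difference sequence $(e_{2n}-e_{2n-1})_n$ of a spreading sequence is $1$-suppression-unconditional; combined with a functional witnessing the oscillation and Rosenthal's $\ell_1$ theorem, this should upgrade non-weak-Cauchyness to an $\ell_1$ lower estimate. A related subtlety worth flagging is that spreading sequences need not be basic (for instance one converging weakly to a nonzero vector); this is harmless here, because such sequences fall under the weakly Cauchy branch, so the basic-sequence argument is invoked only in the $\ell_1$ case, where basicness is automatic.
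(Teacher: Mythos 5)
Your argument is correct, and for the nontrivial implication $(i)\implies(ii)$ it takes a genuinely different route from the paper's. Both proofs begin by localizing the fixed point to a tail intersection, but the paper intersects the closed \emph{linear} spans $F_n=\overline{\spann}(e_i)_{i\geq n}$ and then splits on whether $(e_n)_n$ is basic: if not, it cites the fact that a non-basic spreading sequence is weakly convergent; if so, $F_\infty=\{0\}$ forces the fixed point to be $0$, and it then invokes the fact that for a spreading sequence $0\in\cconv\{e_n\}_n$ is equivalent to weak nullity. You instead intersect the closed \emph{convex} hulls of the tails and split along the Rosenthal-type dichotomy for spreading sequences (weakly Cauchy versus $\ell_1$-equivalent). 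Your weakly Cauchy branch is more self-contained and slightly more informative: the squeeze $\inf_{m>k}f(e_m)\leq f(z)\leq\sup_{m>k}f(e_m)$ identifies the fixed point as the weak limit of $(e_n)_n$ directly, with no appeal to the ``$0\in\cconv$ iff weakly null'' lemma, and it absorbs the non-basic and the weakly-Cauchy-non-convergent cases in one stroke; your $\ell_1$ branch (coordinate functionals force $z=0$ while the sum functional forces $\phi(z)=1$) is also sound. The only external input you use is the dichotomy itself; this is a standard fact about fundamental sequences of spreading models, available in the very reference the paper leans on (Beauzamy--Lapresté) and provable exactly as you sketch, with the additional observation that $\bigl\|\sum a_i(e_{2i}-e_{2i-1})\bigr\|\leq 2\bigl\|\sum a_ie_i\bigr\|$ by spreading invariance, which transfers the $\ell_1$ lower estimate from the difference sequence to $(e_n)_n$ itself. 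So flagging the dichotomy as the main obstacle is overly cautious: the paper's proof is shorter but outsources two lemmas to the literature, while yours outsources one and does correspondingly more of the work explicitly.
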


\begin{proof}
$(ii)\implies (i)$ Suppose that $(e_n)_n$ weakly converges to $e\in Z$. By weak continuity of $T$, we have that $e_{n+1}=T(e_n)\xrightarrow[n]{w}T(e)$ and since $e_{n+1}\xrightarrow[n]{w}e$ we deduce that $T(e)=e$. Obviously we also have that $e\in\cconv\{e_n\}_n$.

$(i)\implies(ii)$ Let $e\in \cconv\{e_n\}_n$ be a fixed point of $T$. For $n\geq 1$, define $F_n=\overline{\text{span}(e_i)_{i\geq n}}$. It is clear that  $e\in F_\infty:=\bigcap_{n\geq 1} F_n$.

Note that we can suppose that $(e_n)_n$ is a basic sequence. In fact, if $(e_n)_n$ is not a basic sequence then $(e_n)_n$ weakly converges (see Proposition 2 p.17 in \cite{Beauzamy}) and we are done. So let us suppose that $(e_n)_n$ is a basic sequence. Then $F_\infty=\{0\}$ and it follows that $e=0\in \cconv\{e_n\}_n$. Since $(e_n)_n$ is a spreading sequence, this is equivalent to the fact that $e_n\xrightarrow[n]{w}0$.
\end{proof}

\subsection{Finite-representability of sets}

\begin{defi}
Let $X$ and $Y$ be Banach spaces. We say that a set $A\subset X$ is \textit{finitely represented} in a set $B\subset Y$ (in short, $A$ is f.r. in $B$) if for all $\varepsilon>0$ and all $A_0\subset A$ finite linearly independent set, there exist $B_0\subset B$ and an isomorphism $T:\text{span}(A_0)\to\text{span}(B_0)$ such that $T(A_0)\subset B$ and for all $x\in \text{span}(A_0)$ $$(1-\varepsilon)\|x\|\leq\|T(x)\|\leq(1+\varepsilon)\|x\|.$$
\end{defi}

Note that the following definition generalized the usual one for Banach spaces since $X$ is finitely representable in $Y$ (we also write that $X$ is f.r. in $Y$) if and only if $B_X$ is f.r. in $B_Y$.\\

We will need the following lemma:

\begin{lema}
Let $A\subset X$ be a subset of a Banach space $X$ and let $\varepsilon>0$. Then for any finite linearly independent set $x_1,...,x_N$ in $\overline{A}$, there exists a finite linearly independent set $y_1,...,y_N$ in $A$ such that $\|x_k-y_k\|<\varepsilon$ for all $1\leq k\leq N$.
\end{lema}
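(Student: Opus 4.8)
The plan is to measure the degree of linear independence of $x_1,\dots,x_N$ by a single positive constant and then observe that this independence survives any sufficiently small perturbation. Since each $x_k$ lies in $\overline{A}$, replacing it by a nearby point of $A$ is automatic; the only delicate issue is to guarantee that the chosen approximants stay linearly independent.

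First I would introduce the independence constant
$$\delta = \min\left\{\, \left\|\sum_{k=1}^{N} a_k x_k\right\| : (a_1,\dots,a_N)\in\mathbb{R}^N,\ \sum_{k=1}^{N}|a_k| = 1 \,\right\}.$$
The minimum is taken over the unit sphere of $(\mathbb{R}^N,\|\cdot\|_1)$, which is compact, and the map $(a_k)\mapsto\left\|\sum_k a_k x_k\right\|$ is continuous, so the minimum is attained. It is strictly positive, for otherwise there would be coefficients with $\sum_k|a_k|=1$ (hence not all zero) satisfying $\sum_k a_k x_k = 0$, contradicting the linear independence of $x_1,\dots,x_N$.

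Next I would fix any $\eta$ with $0<\eta<\min(\varepsilon,\delta)$ and, using $x_k\in\overline{A}$, select $y_k\in A$ with $\|x_k-y_k\|<\eta$ for each $k$. This already yields the required estimate $\|x_k-y_k\|<\varepsilon$. To verify linear independence of the $y_k$, I would take arbitrary coefficients normalized by $\sum_k|a_k|=1$ and estimate
$$\left\|\sum_{k=1}^{N}a_k y_k\right\| \geq \left\|\sum_{k=1}^{N}a_k x_k\right\| - \sum_{k=1}^{N}|a_k|\,\|x_k-y_k\| \geq \delta - \eta > 0.$$
Thus no nontrivial linear combination of $y_1,\dots,y_N$ can vanish, which is precisely linear independence.

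There is no serious obstacle: the statement is a soft stability argument. The one point one cannot skip is the strict positivity of $\delta$, and this is exactly where the finite-dimensionality of the coefficient space (compactness of the $\ell_1$-sphere) together with the independence of the original vectors enters. Everything else is a routine triangle-inequality estimate, and the normalization by the $\ell_1$-norm is chosen so that the perturbation term is bounded cleanly by $\eta$.
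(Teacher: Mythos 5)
Your proof is correct, but it takes a genuinely different route from the paper's. The paper approximates each $x_k$ by a sequence in $A$, composes with a bounded projection $p$ of $X$ onto the finite-dimensional space $\mathrm{span}\{x_1,\dots,x_N\}$, and invokes the continuity of the determinant to conclude that the images $p(y_n^1),\dots,p(y_n^N)$ --- and hence the $y_n^k$ themselves --- are eventually linearly independent. You instead quantify the independence of $x_1,\dots,x_N$ by the constant $\delta=\min\{\|\sum_k a_k x_k\| : \sum_k|a_k|=1\}>0$ (positive by compactness of the $\ell_1$-sphere of coefficients) and show via the triangle inequality that any perturbation of size less than $\delta$ in each coordinate preserves independence. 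Your argument is more elementary --- it needs neither the complementability of finite-dimensional subspaces nor the determinant --- and it yields an explicit quantitative stability radius, namely that \emph{every} choice of $y_k\in A$ with $\|x_k-y_k\|<\min(\varepsilon,\delta)$ works, whereas the paper's argument only produces some sufficiently far term of the approximating sequences. Both proofs are complete and correct.
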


\begin{proof}
There exist sequences $(y_n^k)_n\subset A$ such that $(y_n^k)_n\xrightarrow[n]{}x_k$ for all $1\leq k\leq N$. Without loss of generality, we can suppose that $\|y_n^k-x_k\|<\varepsilon$ for all $n\in\mathbb N$ and $1\leq k\leq N$. Since $\text{span}\{x_k\}_{1\leq k\leq N}$ is finite dimensional, it is complemented in $X$ and then there exists a bounded onto projection $p:X\to \text{span}\{x_k\}_{1\leq k\leq N}$. By continuity of $p$, we have that $p(y_n^k)_n\xrightarrow[n]{}p(x_k)=x_k$ for all $1\leq k\leq N$. Now using the continuity of the determinant in $\text{span}\{x_k\}_{1\leq k\leq N}$, we deduce that there exists $n_0\in\mathbb N$ such that the family $p(y_{n_0}^1),...,p(y_{n_0}^N)$ is linearly independent for all $n\geq n_0$. The family $y_{n_0}^1,...,y_{n_0}^N$ is linearly independent set and fulfills that $\|x_k-y_{n_0}^k\|<\varepsilon$ for all $1\leq k\leq N$.
\end{proof}

The following result is well-known in the case of Banach spaces and is an adaptation to the finite-representability of sets:

\begin{prop}
Let $A\subset X$ and $B\subset Y$ be subsets of two Banach spaces $X$ and $Y$. Suppose that $A$ can be written $A=\overline{\bigcup_{n=1}^\infty A_n}$ where $(A_n)_n$ is an increasing sequence of sets such that $A_n$ is f.r. in $B$. Then $A$ is f.r. in $B$.

In particular, $A$ is f.r. in $B$ if and only if $\overline{A}$ is f.r. in $B$.
\end{prop}

\begin{proof}
Let $e_1,...,e_N$ be a finite linearly independent set in $\overline{A}$ and let $\varepsilon>0$. Define $E=\text{span}\{x_k\}_{1\leq k\leq N}$. Since $E$ is finite dimensional, there exists $C>0$ such that for all $a_1,...,a_N\in\mathbb R$ $$\frac{1}{C}\max_{1\leq k\leq N}|a_k|\leq\left\|\sum_{k=1}^Na_ke_k\right\|\leq C\max_{1\leq k\leq N}|a_k|.$$ Choose $\nu>0$ such that $(1+\varepsilon)\frac{1+CN\nu}{1-CN\nu}<1+2\varepsilon$. By the previous lemma, there exist a finite linearly independent set $x_1,...,x_N\in\bigcup_{n=1}^\infty A_n$ such that $\|e_k-x_k\|<\varepsilon$ for all $1\leq k\leq N$. Let $n\in\mathbb N$ such that $x_1,...,x_n\in A_n$. Since $A_n$ is f.r. in $B$, there exist $B_0\subset B$ and an isomorphism $T:\text{span}(A_0)\to\text{span}(B_0)$ such that $T(A_0)\subset B$ and $\|T\|\|T^{-1}\|<1+\varepsilon$. Define a linear operator $S:E\to\text{span}(B_0)$ by $S(e_k)=T(x_k)$ for all $1\leq k\leq N$. Take $e=\sum_{k=1}^Na_ke_k\in E$. Note that $$\left\|\sum_{k=1}^Na_ke_k-\sum_{k=1}^Na_kx_k\right\|\leq N\nu\max_{1\leq k\leq N}|a_j|\leq CN\nu\left\|\sum_{k=1}^Na_ke_k\right\|.$$

On one hand, we have that 
\begin{align*}
\|S(e)\|=\left\|T\left(\sum_{k=1}^Na_kx_k\right)\right\|&\leq \|T\|\left\|\sum_{k=1}^Na_ke_k\right\|\\
&\leq \|T\|\left(\left\|\sum_{k=1}^Na_ke_k-\sum_{k=1}^Na_kx_k\right\|+\left\|\sum_{k=1}^Na_ke_k\right\|\right) \\
&\leq\|T\|\|e\|(1+CN\nu)
\end{align*}
and on the other hand
\begin{align*}
\|S(e)\|=\left\|T\left(\sum_{k=1}^Na_kx_k\right)\right\| &\geq \frac{1}{\|T^{-1}\|}\left\|\sum_{k=1}^Na_kx_k\right\| \\
&\geq \frac{1}{\|T^{-1}\|}\left(\left\|\sum_{k=1}^Na_ke_k\right\|-\left\|\sum_{k=1}^Na_ke_k-\sum_{k=1}^Na_kx_k\right\|\right)\\
&\geq \frac{1}{\|T^{-1}\|}\|e\|(1-CN\nu).
\end{align*}
We conclude that $\|S\|\|S^{-1}\|\leq\|T\|\|T^{-1}\|\frac{1+CN\nu}{1-CN\nu}\leq(1+\varepsilon)\frac{1+CN\nu}{1-CN\nu}<1+2\varepsilon$.
\end{proof}

\begin{coro}\label{fr_sm}
Let $Z$ be a spreading model of a Banach space $X$ built on $(x_n)_n$ with spreading sequence $(e_n)_n$. Then $\cconv\{e_n\}_n$ is f.r. in $\co\{x_n\}_n$.
\end{coro}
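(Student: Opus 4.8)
The plan is to reduce the statement to the finite representability of the spreading model recorded in \eqref{fr}, which, for large indices $n_1<\dots<n_N$, compares $\left\|\sum_{i=1}^N a_ie_i\right\|$ with $\left\|\sum_{i=1}^N a_ix_{n_i}\right\|$. Since $\cconv\{e_n\}_n=\overline{\co\{e_n\}_n}$, the equivalence in the last sentence of the previous proposition allows me to replace the claim by the equivalent one that $\co\{e_n\}_n$ is f.r. in $\co\{x_n\}_n$; this removes the closure and lets me work with honest finite convex combinations of the $e_n$.

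So I would fix $\varepsilon>0$ and a finite linearly independent set $A_0=\{z_1,\dots,z_m\}\subset\co\{e_n\}_n$. Each $z_j$ is a finite convex combination of the $e_i$, hence for some $K$ all of them lie in $\text{span}\{e_1,\dots,e_K\}$, say $z_j=\sum_{i=1}^K\lambda_i^je_i$ with $\lambda_i^j\geq 0$ and $\sum_{i=1}^K\lambda_i^j=1$. Applying \eqref{fr} with $N=K$ and a parameter $\eta>0$ to be chosen, I get $p$ so that for any $n_1<\dots<n_K$ with $n_1\geq p$ the assignment $e_i\mapsto x_{n_i}$ extends to a linear map $S$ on $\text{span}\{e_1,\dots,e_K\}$ with $(1-\eta)\|S(z)\|\leq\|z\|\leq(1+\eta)\|S(z)\|$. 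The crucial point is that $S$ carries convex combinations of the $e_i$ to convex combinations of the $x_{n_i}$, so each $S(z_j)=\sum_{i=1}^K\lambda_i^jx_{n_i}$ belongs to $\co\{x_n\}_n$. Setting $T=S|_{\text{span}(A_0)}$ and $B_0=T(A_0)$, I obtain an isomorphism $T:\text{span}(A_0)\to\text{span}(B_0)$ with $T(A_0)\subset\co\{x_n\}_n$, exactly as required by the definition of finite representability.

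The only genuine subtlety is the distortion constant. Rearranging the inequality above gives $\frac{1}{1+\eta}\|z\|\leq\|S(z)\|\leq\frac{1}{1-\eta}\|z\|$, which is not symmetric about $1$; and, unlike in the purely linear setting, I am not allowed to rescale $S$ to symmetrize it, since multiplying by any constant different from $1$ would destroy the convexity of the combinations $S(z_j)$ and push them out of $\co\{x_n\}_n$, violating the requirement $T(A_0)\subset B$. Instead I would simply choose $\eta$ small in terms of $\varepsilon$, for instance $\eta\leq\frac{\varepsilon}{1+\varepsilon}$, which forces $\frac{1}{1+\eta}\geq 1-\varepsilon$ and $\frac{1}{1-\eta}\leq 1+\varepsilon$, whence $(1-\varepsilon)\|z\|\leq\|T(z)\|\leq(1+\varepsilon)\|z\|$ for every $z\in\text{span}(A_0)$. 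This establishes that $\co\{e_n\}_n$ is f.r. in $\co\{x_n\}_n$, and the reduction in the first paragraph finishes the proof.
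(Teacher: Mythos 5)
Your proof is correct and follows essentially the same route as the paper's: reduce via the preceding proposition to finite convex combinations of the $e_n$ and then apply (\ref{fr}). The paper's version is a two-line appeal to those two facts; your added care with the asymmetric distortion constant (and the observation that one cannot rescale the map without leaving $\co\{x_n\}_n$) merely fills in a detail the paper leaves implicit.
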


\begin{proof}
By the previous proposition, it is enough to prove that $\co\{e_1,...,e_p\}$ is f.r. in $\{x_n\}_n$ for all $p\geq 1$. But that follows directly from (\ref{fr}).
\end{proof}

\subsection{Super weak compactness}

\begin{defi}
Let $X$ be a Banach space and let $A\subset X$ be a bounded set.  We say that $A$ is \textit{relatively super-weakly compact} if any set f.r. in $A$ is relatively weakly compact. Furthemore if $A$ is weakly closed, we say that $A$ is \textit{super weakly compact} (in short, SWC).
\end{defi}

This notion is a localized version of superreflexivity in the sense that a Banach space $X$ is superreflexive if and only if $B_X$ is SWC.

We recall that if $A$ is a bounded subset of a Banach space $X$ and if $\mathcal U$ is an ultrafilter on a set $I$, the \textit{ultrapower} of $A$ is defined by $$A^\mathcal U=\{(x_i)_{\mathcal U}\ :\ \forall i\in I\ \ x_i\in A\}\subset X^{\mathcal U}.$$ The two following results are similar to Propositions 6.1 and 6.2 in \cite{Heinrich} and we omit the proofs since they only require minor adjustments.

\begin{prop}\label{fr1}
Let $A$ be a bounded subset of a Banach space $X$. Then $A^\mathcal U$ is f.r. in $A$ for any free ultrafilter $\mathcal U$.
\end{prop}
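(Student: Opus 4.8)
The plan is to unwind the definition of finite representability and to realize an arbitrary finite configuration in $A^{\mathcal U}$ by a single coordinate of the ultrapower. Fix $\varepsilon>0$ and a finite linearly independent set $\{\xi_1,\dots,\xi_N\}\subset A^{\mathcal U}$, and write $\xi_k=(x_i^k)_{\mathcal U}$ with $x_i^k\in A$ for every $i\in I$. Set $E=\spann\{\xi_1,\dots,\xi_N\}$. For each index $i\in I$, the assignment $\xi_k\mapsto x_i^k$ extends to a linear map $T_i\colon E\to X$ whose image lies in $\spann\{x_i^1,\dots,x_i^N\}$ and which sends each $\xi_k$ to the point $x_i^k\in A$. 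The goal is then to find a single $i$ for which $T_i$ is a $(1+\varepsilon)$-isomorphism onto its image, since $B_0=\{x_i^1,\dots,x_i^N\}$ will witness the f.r. property.

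The first step is a uniform bound on the family $(T_i)_i$, and this is exactly where boundedness of $A$ enters. Let $R=\sup_{a\in A}\norm{a}<\infty$ and let $C>0$ be a basis constant for $E$, so that $\sum_{k=1}^N\abs{a_k}\leq C\norm{\sum_{k=1}^N a_k\xi_k}$ for all scalars $a_1,\dots,a_N$. Then for $u=\sum_k a_k\xi_k$ one has $\norm{T_i u}\leq R\sum_k\abs{a_k}\leq RC\norm{u}$, so $\norm{T_i}\leq RC$ for every $i$ simultaneously.

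The second step uses the defining property of the ultrapower norm together with a compactness argument. Since $E$ is finite dimensional, its unit sphere $S_E$ is compact; fix a finite $\delta$-net $\{u_1,\dots,u_M\}\subset S_E$, with $\delta>0$ to be chosen. Writing $u_j=\sum_k a_k^j\xi_k$, the identity $\norm{u_j}=\lim_{\mathcal U}\norm{\sum_k a_k^j x_i^k}=1$ shows that each set $I_j=\{\,i\in I:\ \abs{\norm{T_i u_j}-1}<\delta\,\}$ belongs to $\mathcal U$. As $\mathcal U$ is a filter, the intersection $I_0=\bigcap_{j=1}^M I_j$ is again in $\mathcal U$, hence nonempty; fix any $i\in I_0$ and abbreviate $T=T_i$.

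It remains to upgrade the control from the net to all of $S_E$. Given $u\in S_E$, choose $u_j$ with $\norm{u-u_j}\leq\delta$; combining the net estimate with the uniform bound yields $\abs{\norm{Tu}-1}\leq\abs{\norm{Tu_j}-1}+\norm{T}\,\norm{u-u_j}\leq(1+RC)\delta$. Choosing $\delta$ so small that $(1+RC)\delta<\varepsilon$, homogeneity gives $(1-\varepsilon)\norm{u}\leq\norm{Tu}\leq(1+\varepsilon)\norm{u}$ for all $u\in E$; in particular $T$ is injective, hence an isomorphism onto $\spann\{x_i^1,\dots,x_i^N\}$, with $T(\{\xi_k\})\subset A$. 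This is precisely the assertion that $A^{\mathcal U}$ is f.r. in $A$. The only genuine subtlety beyond the classical Banach-space argument (where one normalizes inside the unit ball) is the uniform boundedness of the $T_i$, which here is supplied by the hypothesis that $A$ is bounded; this is the ``minor adjustment'' alluded to in the reference to \cite{Heinrich}.
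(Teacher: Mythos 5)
Your proof is correct and is exactly the argument the paper has in mind: the paper omits the proof of Proposition \ref{fr1}, citing Proposition 6.1 of Heinrich with ``minor adjustments'', and your net-plus-filter argument on the unit sphere of $\spann\{\xi_1,\dots,\xi_N\}$ is that standard argument, with the boundedness of $A$ correctly identified as what replaces the usual normalization to supply the uniform bound $\norm{T_i}\leq RC$. No gaps.
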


\begin{prop}\label{fr2}
Let $X$ and $Y$ be Banach spaces. Suppose that $B\subset Y$ is f.r. in $A\subset X$. If $B_0\subset B$ is a linearly independent set, then there exist a free ultrafilter $\mathcal U$ and a linear isometry $T:\text{span}(B_0)\to(\text{span}(A))^{\mathcal U}$ such that $T(B_0)\subset A^{\mathcal U}$.
\end{prop}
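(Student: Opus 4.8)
The plan is to mirror the proof of Proposition 6.2 in \cite{Heinrich}, realizing the desired isometry as an ultraproduct of the approximate embeddings furnished by finite representability. Write $B_0=\{y_1,\dots,y_n\}$. For each $m\in\mathbb N$, I would apply the definition of ``$B$ is f.r. in $A$'' to the finite linearly independent set $B_0$ with $\varepsilon=1/m$. This produces a finite set $A_0^m\subset A$ together with an isomorphism $S_m:\text{span}(B_0)\to\text{span}(A_0^m)\subset\text{span}(A)$ satisfying $S_m(B_0)\subset A$ and
\[
(1-\tfrac1m)\|x\|\leq\|S_m(x)\|\leq(1+\tfrac1m)\|x\|\qquad\text{for all }x\in\text{span}(B_0).
\]

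Next I would fix any free ultrafilter $\mathcal U$ on $\mathbb N$ and define $T:\text{span}(B_0)\to(\text{span}(A))^{\mathcal U}$ by $T(x)=(S_m(x))_{\mathcal U}$. The upper estimate gives $\|S_m(x)\|\leq 2\|x\|$, so $(S_m(x))_m$ is a bounded family and represents a genuine element of the ultrapower; linearity of $T$ is inherited from the linearity of each $S_m$. For the isometry property one computes $\|T(x)\|=\lim_{\mathcal U}\|S_m(x)\|$, and since the two-sided estimate forces $\|S_m(x)\|\to\|x\|$ as $m\to\infty$, the ultralimit along the free ultrafilter $\mathcal U$ equals this ordinary limit, namely $\|x\|$. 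Thus $T$ is a linear isometry. Finally, for each $y_j\in B_0$ we have $S_m(y_j)\in A$ for every $m$, so $T(y_j)=(S_m(y_j))_{\mathcal U}\in A^{\mathcal U}$, which yields $T(B_0)\subset A^{\mathcal U}$.

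I do not expect a serious obstacle: this is the standard ultraproduct stabilization of approximate finite-dimensional embeddings, and any free ultrafilter on $\mathbb N$ suffices because an ultralimit along such an ultrafilter agrees with the ordinary limit whenever the latter exists. The only point demanding a little care is tracking that the chosen representatives land in the correct sets, namely $S_m(x)\in\text{span}(A)$ so that $T$ takes values in $(\text{span}(A))^{\mathcal U}$, and $S_m(y_j)\in A$ so that $T(B_0)\subset A^{\mathcal U}$; both inclusions are delivered directly by the conclusion of the definition of finite representability of sets.
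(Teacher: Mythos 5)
Your approach is the right one — the paper itself omits the proof, pointing to Proposition 6.2 of \cite{Heinrich}, and the intended argument is exactly this ultraproduct stabilization of the approximate embeddings. However, there is a genuine gap: you begin by writing $B_0=\{y_1,\dots,y_n\}$, i.e.\ you silently assume $B_0$ is \emph{finite}. The proposition makes no such assumption, and in its only application in the paper (the proof of Theorem \ref{thm.fr}) $B_0$ is an infinite linearly independent sequence $(x_n)_{n\in\mathbb N}$; the definition of finite representability of sets only hands you embeddings of $\spann(A_0)$ for \emph{finite} subsets $A_0\subset B_0$, so there is no single $S_m$ defined on all of $\spann(B_0)$ to plug into your formula $T(x)=(S_m(x))_{\mathcal U}$. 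The standard repair is to index by the directed set $I$ of pairs $i=(F,\varepsilon)$ with $F\subset B_0$ finite and $\varepsilon>0$, take $\mathcal U$ to be any ultrafilter on $I$ refining the order filter (such an ultrafilter is free), let $S_i$ be the $(1\pm\varepsilon)$-embedding of $\spann(F)$ given by finite representability, extend each $S_i$ arbitrarily (say by $0$) to $\spann(B_0)$, and observe that for each fixed $x\in\spann(B_0)$ the set of indices $i=(F,\varepsilon)$ with $x\in\spann(F)$ and $\bigl|\,\|S_i(x)\|-\|x\|\,\bigr|\leq\varepsilon\|x\|$ belongs to $\mathcal U$, so $\lim_{\mathcal U}\|S_i(x)\|=\|x\|$. (If one only cares about countable $B_0$, as in Theorem \ref{thm.fr}, your $\mathbb N$-indexed scheme survives by applying the definition at stage $m$ to $\{y_1,\dots,y_m\}$ with $\varepsilon=1/m$.)

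A second, smaller point needs a word in either version: the paper defines $A^{\mathcal U}$ as the set of classes $(x_i)_{\mathcal U}$ with $x_i\in A$ for \emph{every} $i$, so to conclude $T(y)\in A^{\mathcal U}$ for $y\in B_0$ you must note that $S_i(y)\in A$ only for $i$ in a set of $\mathcal U$ (namely when $y\in F$), and then replace the remaining coordinates by a fixed point of $A$ without changing the class; this is harmless since $A$ is bounded, but it is exactly the kind of bookkeeping your last paragraph claims is ``delivered directly,'' and in the infinite case it is not.
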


We deduce the following caracterization of SWC sets:

\begin{thm}\label{thm.fr}
Let $K$ be a bounded weakly closed subset of a Banach space $X$. The following assertions are equivalent:
\begin{enumerate}
    \item[(i)] $K$ is SWC;
    \item[(ii)] $K^\mathcal U$ is relatively weakly compact for any ultrafilter $\mathcal U$.
\end{enumerate}
\end{thm}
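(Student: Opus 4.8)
The plan is to treat the two implications separately: $(i)\Rightarrow(ii)$ will be essentially immediate from Proposition \ref{fr1}, while $(ii)\Rightarrow(i)$ carries the real content and will be obtained from Proposition \ref{fr2} combined with the Eberlein--\v{S}mulian theorem.

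For $(i)\Rightarrow(ii)$, let $\mathcal U$ be an ultrafilter. If $\mathcal U$ is free, Proposition \ref{fr1} gives that $K^{\mathcal U}$ is f.r.\ in $K$, so the definition of SWC immediately yields that $K^{\mathcal U}$ is relatively weakly compact. If $\mathcal U$ is principal, then $X^{\mathcal U}$ is canonically isometric to $X$ with $K^{\mathcal U}$ corresponding to $K$; since $K$ is f.r.\ in itself through the identity, SWC forces $K$ to be relatively weakly compact, and hence so is $K^{\mathcal U}$.

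For $(ii)\Rightarrow(i)$, let $B$ be f.r.\ in $K$; I must show that $B$ is relatively weakly compact, and by the Eberlein--\v{S}mulian theorem it suffices to produce, for every sequence $(b_n)_n$ in $B$, a weakly convergent subsequence. First I would run a greedy linear-independence selection on $(b_n)_n$: either the process terminates, in which case a tail of the sequence lies in a fixed finite-dimensional subspace and local compactness supplies a norm-convergent (hence weakly convergent) subsequence, or it yields a linearly independent subsequence, so that I may assume $B_0:=\{b_n:n\in\mathbb N\}$ is linearly independent. Applying Proposition \ref{fr2} to $B_0$ then gives a free ultrafilter $\mathcal U$ and a linear isometry $T:\operatorname{span}(B_0)\to(\operatorname{span}(K))^{\mathcal U}$ with $T(B_0)\subset K^{\mathcal U}$.

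The key step is the transfer of weak compactness through $T$. By hypothesis $(ii)$, $K^{\mathcal U}$ is relatively weakly compact in $X^{\mathcal U}$, so $(Tb_n)_n\subset K^{\mathcal U}$ has a subsequence with $Tb_{n_k}\xrightarrow[k]{w}y\in X^{\mathcal U}$. Since each $Tb_{n_k}$ lies in the closed, hence weakly closed, subspace $\overline{\operatorname{span}}(T(B_0))$, the limit $y$ belongs to it too. Now $T$ extends to a surjective linear isometry $\overline{\operatorname{span}}(B_0)\to\overline{\operatorname{span}}(T(B_0))$ whose inverse is again an isometry, so weak-to-weak continuous; applying it gives $b_{n_k}\xrightarrow[k]{w}T^{-1}(y)$ in $\overline{\operatorname{span}}(B_0)$, and weak convergence in a closed subspace coincides with weak convergence in $X$ by Hahn--Banach. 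I expect this last paragraph to be the main obstacle: one must ensure that the weak limit $y$ actually lands in the range of $T$ (secured by the weak-closedness of $\overline{\operatorname{span}}(T(B_0))$) and that weak convergence is correctly pulled back through the inverse isometry and reinterpreted in $X$ rather than merely in the subspace; the reduction to linearly independent sequences, needed to invoke Proposition \ref{fr2} at all, is a secondary technical point.
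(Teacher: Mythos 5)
Your proposal is correct and follows essentially the same route as the paper: $(i)\Rightarrow(ii)$ via Proposition \ref{fr1}, and the converse via Proposition \ref{fr2} together with the transfer of weak (sub)sequential convergence through the linear isometry $T$, after reducing to a linearly independent sequence. The only cosmetic difference is that you argue the second implication directly (pulling a weak limit back through $T^{-1}$) whereas the paper argues by contraposition (pushing the absence of weakly convergent subsequences forward through $T$); the mathematical content is identical.
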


\begin{proof}
$(i)\implies (ii)$ follows from Proposition \ref{fr1}. Now suppose that there exists a set $A$ f.r. in $K$ such that $A$ is not relatively weakly compact. Then $A$ contains a sequence $(x_n)_{n\in\mathbb N}$ which does not admit any weakly convergent subsequence. Without loss of generality, we can consider that $(x_n)_{n\in\mathbb N}$ is a linearly independent family (in fact, $(x_n)_{n\in\mathbb N}$ admits a maximal linearly independent subsequence by Zorn's lemma). By proposition \ref{fr2}, there exist a free ultrafilter $\mathcal U$ and a linear isometry $T:\text{span}(B_0)\to(\text{span}(A))^{\mathcal U}$ such that $T(B_0)\subset A^{\mathcal U}$. It follows that $A^{\mathcal U}$ is not relatively weakly compact since $(T(x_n))_{n\in\mathbb N}$ does not admit any weakly-convergent subsequence and then $K$ is not SWC.
\end{proof}

Let $C$ be a convex subset of Banach space $X$. A function $f:C\to\mathbb R$ is \textit{uniformly convex} if for all $\varepsilon>0$ there exists $\delta>0$ such that $$f\left(\frac{x+y}{2}\right)\leq\frac{f(x)+f(y)}{2}-\delta$$ whenever $x,y\in C$ are such that $\|x-y\|\geq\varepsilon$. The \textit{modulus of convexity} of $f$ is the function $\delta_f:\mathbb R^+\to\mathbb R^+$ defined by $$\delta_f(\varepsilon)=\inf\left\{\frac{f(x)+f(y)}{2}-f\left(\frac{x+y}{2}\right)\ :\ x,y\in C,\ \|x-y\|\geq\varepsilon\right\}.$$  Finally we remember that a set $K\subset X$ \textit{strongly generates} a set $H\subset X$ if for all $\varepsilon>0$ there is $n\in {\mathbb N}$ such that $H \subset nK+\varepsilon B_X$.\\

The following result is Theorem 3.2 in \cite{S2WCG}. It will be used repeatedly in the next parts since it has a strong connexion with normal structure and fixed point properties.

\begin{thm}\label{unif_convex_renorm}
Let $K$ be a SWC absolutely convex subset of a Banach space $X$. Then $X$ admits an equivalent norm $|.|$ such that the restriction of $|.|^2$ to any convex set strongly generated by $K$ is uniformly convex.
\end{thm}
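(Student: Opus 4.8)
The plan is to obtain the norm by a Davis–Figiel–Johnson–Pełczyński interpolation adapted to the quantitative content of super weak compactness, and then to upgrade mere convexity to uniform convexity by a Pisier-type accumulation argument. The first step is to replace the qualitative hypothesis by a modulus. Since $K$ is absolutely convex and SWC, Theorem \ref{thm.fr} gives that every ultrapower $K^{\mathcal U}$ is weakly compact; by a slicing and separation argument this is equivalent to a uniform finite-dentability estimate, namely the existence for each $\varepsilon>0$ of an integer $n(\varepsilon)$ bounding the number of successive $\varepsilon$-dentability derivations needed to empty $K$ (and, by finite representability, every finite-dimensional section of $K$). I would first isolate this modulus $\varepsilon\mapsto n(\varepsilon)$ and observe that it controls, with only an additive $\varepsilon B_X$ error, every set of the form $mK+\varepsilon B_X$, hence every convex set strongly generated by $K$.

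Next I would construct the candidate norm. For each $m\geq 1$ put $U_m=2^mK+2^{-m}B_X$; as $K$ is absolutely convex and bounded with $B_X$ absorbing, each $U_m$ is a symmetric convex body and its Minkowski gauge $\|\cdot\|_m$ is an equivalent norm on $X$. Interpolating in the DFJP manner, set
\[
|x|^2=\sum_{m\geq 1}\lambda_m\,\|x\|_m^2,
\]
with weights $\lambda_m$ chosen so that the series converges to a norm equivalent to the original one on bounded sets and so that the scale where $K$ dominates is balanced against the scale where $B_X$ dominates. The completion of $\{x:|x|<\infty\}$ is a space into which $K$ embeds boundedly, and the finite-dentability modulus makes this space superreflexive; the sets strongly generated by $K$ are exactly those captured, up to norm-small errors, by the finite scales of this interpolation.

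The heart of the argument, and the step I expect to be the main obstacle, is converting the discrete modulus $n(\varepsilon)$ into an analytic modulus of convexity for $|\cdot|^2$ on a fixed convex set $H$ strongly generated by $K$. Given $x,y\in H$ with $|x-y|\geq\varepsilon$, each term of the sum already satisfies $\|\tfrac{x+y}{2}\|_m^2\leq\tfrac12(\|x\|_m^2+\|y\|_m^2)$ by convexity of the square, so
\[
\frac{|x|^2+|y|^2}{2}-\Big|\frac{x+y}{2}\Big|^2=\sum_{m\geq1}\lambda_m\Big(\frac{\|x\|_m^2+\|y\|_m^2}{2}-\Big\|\frac{x+y}{2}\Big\|_m^2\Big)\geq 0,
\]
and it remains to bound one summand below by a quantity depending only on $\varepsilon$. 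Strong generation of $H$ forces a definite fraction of $\|x-y\|$ to be detected at a single scale $m$ determined by $\varepsilon$ despite the $\varepsilon B_X$ tail, and at that scale the finite-dentability bound must be turned into a strict-convexity gain. The delicate point is that a single slice yields no quantitative gain: one has to iterate over all $n(\varepsilon)$ derivation steps at scale $m$, accumulating the small gains from each slice into a definite defect $\delta(\varepsilon)>0$, exactly as in Pisier's proof that superreflexive spaces carry a norm of power-type modulus of convexity. Making $\lambda_m\,\delta(\varepsilon)$ bounded below uniformly in $H$, which is where the uniformity of $n(\varepsilon)$ (rather than mere qualitative dentability) is indispensable, completes the proof.
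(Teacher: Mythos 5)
First, a point of order: the paper does not prove this statement at all --- it is quoted verbatim as Theorem 3.2 of \cite{S2WCG} --- so there is no internal proof to compare against, and your attempt has to be judged on its own. Your plan correctly identifies the two standard ingredients (a quantitative finite-dentability modulus for $K$, and a DFJP-type interpolation of the gauges of $U_m=2^mK+2^{-m}B_X$), but the decisive step is missing, and the mechanism you propose for it would not work as stated. You reduce everything to ``bound one summand below by a quantity depending only on $\varepsilon$'', i.e.\ to a convexity defect of $\|\cdot\|_m^2$ at a single scale $m$ determined by $\varepsilon$ and by the strong generation of $H$. But each $U_m$ sits between two balls of $X$, so $\|\cdot\|_m$ is merely an equivalent norm on $X$, and finite dentability of $K$ confers no rotundity on the body $2^mK+2^{-m}B_X$: the only convexity gain available at scale $m$ degenerates like a constant times $(2^m+2^{-m})^{-2}$, while strong generation forces $m\to\infty$ as $\varepsilon\to 0$, and the weights $\lambda_m$ must in addition be summable for $|\cdot|$ to be finite (let alone equivalent to $\|\cdot\|$) on all of $X$. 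Whether $\lambda_m\,\delta_m(\varepsilon)$ can be bounded below by a function of $\varepsilon$ alone is therefore precisely the difficulty of the theorem, and your text asserts it (``must be turned into a strict-convexity gain'', ``accumulating the small gains'') rather than proving it. A secondary gap: the equivalence between super weak compactness of a convex set and a uniform finite-dentability index, which you dispatch with ``a slicing and separation argument'', is itself the main theorem of \cite{Raja} and needs to be cited as such, not rederived in a clause.

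For comparison, the actual proof in \cite{S2WCG} does not work with raw gauges of $2^mK+2^{-m}B_X$. It first converts the finite dentability index into a bounded, lower semicontinuous, uniformly convex \emph{function} on $K$ (essentially by summing the rank function of the $\varepsilon$-dentability derivation over a sequence of $\varepsilon$'s, as in \cite{Raja} and \cite{unifconvex}), and only then manufactures the norm from the level sets and epigraph of a suitably symmetrized and homogenized version of that function, combined with the original norm; the uniform convexity of $|\cdot|^2$ on sets strongly generated by $K$ is inherited from the modulus of that function rather than from per-scale rotundity of Minkowski sums. If you want to salvage your approach, the missing lemma you need to formulate and prove is exactly this intermediate one: a SWC absolutely convex $K$ carries a bounded uniformly convex function whose modulus survives the passage to $nK+\varepsilon B_X$ with controlled degradation.
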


\section{Ergodicity and (super) weak compactness}

\begin{defi}
Let $C$ be a convex subset of a Banach space $X$. We say that an affine function $T:C\to C$ is 
\begin{itemize}
    \item ergodic if the Cesaro mean sequence $\left(\frac{1}{n}\sum_{k=0}^{n-1} T^k(x)\right)_n$ converges for all $x\in C$;
    \item Cesaro equicontinuous if $\left\{\frac{1}{n}\sum_{k=0}^{n-1}T^k\right\}_{n\geq 1}$ is an equicontinuous set.
\end{itemize}
We say that $C$ is \textit{ergodic} if any Cesaro equicontinuous affine function $T:C\to C$ is ergodic. We say that $C$ is \textit{super-ergodic} if any convex set which is f.r. in $C$ is ergodic.
\end{defi}

Note the that the previous definition of ergodicity extends the usual one in a natural way. In fact, remember that a Banach space $X$ is \textit{ergodic} if any Cesaro bounded operator $T:X\to X$ (i.e. $\sup_{n\geq 1}\left\|\frac{1}{n}\sum_{k=0}^{n-1}T^n\right\|<\infty)$ is ergodic (see \cite{ergodic_def}).

\begin{prop}\label{ergo_Banach}
A Banach space $X$ is ergodic if and only if $B_X$ is ergodic.
\end{prop}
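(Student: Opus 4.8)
The plan is to set up a dictionary between affine self-maps of $B_X$ and bounded linear operators on $X$, and then read the two definitions of ergodicity through it. Given an affine map $F\colon B_X\to B_X$, put $b=F(0)\in B_X$ and let $S$ denote its linear part, obtained by extending $x\mapsto F(x)-b$ from $B_X$ to all of $X$ by homogeneity (additivity being valid on the ball). From $F(x)-F(y)=S(x-y)$ one sees that $S(B_X)$ is a symmetric convex subset of the translate $B_X-b$, which forces $S(B_X)\subset B_X$, i.e. $\|S\|\le 1$; in particular $S$ is power bounded. Iterating $F$ gives $F^k=S^k+\sum_{j=0}^{k-1}S^j b$, so the affine Ces\`aro means split as
\[
\frac1n\sum_{k=0}^{n-1}F^k(x)=A_nx+\sigma_n,\qquad A_n=\frac1n\sum_{k=0}^{n-1}S^k,\quad \sigma_n=\frac1n\sum_{k=0}^{n-1}F^k(0).
\]
Applying this to differences $F^k(x)-F^k(y)=S^k(x-y)$ shows that Ces\`aro equicontinuity of $\{\frac1n\sum_{k<n}F^k\}$ is exactly Ces\`aro boundedness of $S$.

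For the forward implication, assume $X$ is ergodic and let $F$ be affine and Ces\`aro equicontinuous on $B_X$. Its linear part $S$ is a Ces\`aro bounded (indeed contractive) operator, so by hypothesis $A_nx$ converges for every $x\in X$; write $P$ for the limit, the ergodic projection, which satisfies $PS=SP=P$. It then remains to prove that the translation term $\sigma_n$ converges. Since $F$ maps $B_X$ into itself the orbit $\{F^k(0)\}$ stays in $B_X$, so $\sigma_n\in B_X$; moreover a direct computation gives $(I-S)\sigma_n=b-\tfrac1nF^n(0)\to b$, and boundedness of $\{F^k(0)\}$ together with $PS=P$ forces $Pb=0$, whence $P\sigma_n=0$ and $\sigma_n\in\overline{(I-S)X}$. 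The convergence of $\sigma_n$ is then extracted from the mean ergodic behaviour of the power bounded operator $S$ on $\overline{(I-S)X}$ (equivalently, by applying ergodicity to the linear dilation $(x,t)\mapsto(Sx+tb,t)$ on $X\oplus\mathbb R$). This last step is the analytic heart of the direction.

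For the converse, assume $B_X$ is ergodic. Taking $b=0$ in the dictionary, every linear contraction $S$ is a Ces\`aro-equicontinuous self-map of $B_X$, hence mean ergodic; so $B_X$ ergodic already yields that every contraction on $X$ is mean ergodic. To reach an arbitrary Ces\`aro bounded operator $T$ I would renorm $X$ by $|x|=\sup_n\|A_nx\|$, where $A_n=\frac1n\sum_{k<n}T^k$; this is an equivalent norm ($\|x\|\le|x|\le M\|x\|$ with $M=\sup_n\|A_n\|$), and since both equicontinuity and convergence are preserved under equivalent renorming the unit ball of $(X,|\cdot|)$ is still ergodic. Working in this norm, the plan is to deduce the two ingredients of the mean ergodic theorem for $T$—that $\tfrac1nT^n\to0$ strongly and that $X=\ker(I-T)\oplus\overline{(I-T)X}$—and thus the ergodicity of $T$.

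The main obstacle is precisely this last passage in the converse. The linear part of any affine self-map of $B_X$ is necessarily a contraction, while a Ces\`aro bounded operator need not be similar to a contraction, so the implication ``every contraction is mean ergodic $\Rightarrow$ every Ces\`aro bounded operator is mean ergodic'' is not formal and must be earned through the renorming and the direct verification of the mean ergodic splitting above. In the forward direction the corresponding delicate point is the convergence of the constant term $\sigma_n$, which is where the power boundedness of the linear part and the ergodic decomposition do the real work.
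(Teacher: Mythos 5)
Your reduction of an affine self-map $F$ of $B_X$ to a linear contraction $S$ plus a translation $b$, with $\|S\|\le 1$ coming from the symmetry of $S(B_X)$, is correct and is essentially the paper's own reduction (the paper extends $x\mapsto T(x)-T(0)$ to a linear contraction by homogeneity). The trouble is that both steps you yourself flag as the ``analytic heart'' are genuine gaps rather than routine verifications. In the forward direction, convergence of $\sigma_n=\frac1n\sum_{k=0}^{n-1}F^k(0)$ is \emph{not} extracted from the mean ergodic behaviour of $S$ on $\overline{(I-S)X}$: $\sigma_n$ is not a Ces\`aro mean $A_ny$ of a fixed vector but a second-order mean, $\sigma_{n+1}=\frac{1}{n+1}\sum_{k=0}^{n-1}(n-k)S^kb$. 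Your argument closes only when $b\in(I-S)X$ (then $F^k(0)=c-S^kc$ and $\sigma_n=c-A_nc\to c-Pc$), but boundedness of the orbit $\{F^k(0)\}$ only yields $b\in\overline{(I-S)X}$; upgrading that to $b\in(I-S)X$ is a Browder-type statement whose standard proof uses weak compactness or reflexivity, neither of which is among the hypotheses. The dilation alternative requires $X\oplus\mathbb{R}$ to be ergodic, which is likewise not given. (For what it is worth, the paper's own proof hides exactly this issue inside the unjustified ``without loss of generality $T(0)=0$''.)

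The converse contains a step that would fail outright: with $|x|=\sup_n\|A_nx\|$ one only gets $\|A_nTx\|\le\frac{n+2}{n}|x|$, not $|Tx|\le|x|$, and no equivalent norm can turn a Ces\`aro bounded $T$ into a contraction, because contractions are power bounded while Ces\`aro bounded operators need not be: already $T=\bigl(\begin{smallmatrix}-1&1\\0&-1\end{smallmatrix}\bigr)$ on $\mathbb{R}^2$ has $\|T^n\|\sim n$, is Ces\`aro bounded, and is not mean ergodic (which also shows the proposition cannot hold under the paper's literal ``Ces\`aro bounded'' reading of ergodicity). If the operator class is read as the power bounded operators, as in Fonf--Lin--Wojtaszczyk, then the classical renorming $|x|=\sup_n\|T^nx\|$ does make $T$ a contraction and your converse goes through as planned; the paper's ``without loss of generality $\|T\|\le 1$'' is that same silent step. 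So: right dictionary, same skeleton as the paper, but the two passages you defer are precisely where the mathematics lies, and neither of your proposed devices closes them.
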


\begin{proof}
Suppose that $X$ is ergodic. Let $T:B_X\to B_X$ be a Cesaro equicontinuous affine function. Without loss of generality, we can suppose that $T(0)=0$. Note that $T$ can be extended to $X$ by $T':X\to X$ by $T'(x)=T\left(\frac{x}{\|x\|}\right)\|x\|$. It is easy to prove that $T'$ is linear. Moreover, from the Cesaro equicontinuity of $T$, it is clear that $T'$ is Cesaro bounded. We deduce that $T'$ is ergodic and then $T$ also is.

Now let's suppose that $B_X$ is ergodic. Let $T:X\to X$ such that $T$ is Cesaro bounded. Without loss of generality, we can suppose that $\|T\|\leq 1$. So $T|_{B_X}:B_X\to B_X$ is well-defined, Cesaro equicontinuous and then is ergodic by hypothesis. It follows that $T$ is ergodic.
\end{proof}

We start with an adaptation of the mean ergodic theorem (see Theorem 1.1 p.72 in \cite{ergodic}):

\begin{thm}\label{ergo}
Let $C$ be a bounded convex subset of a Banach space $X$ and let $x,y\in C$. Let $T:C\to C$ be a Cesaro equicontinuous affine function. The following assertions are equivalent:
\begin{enumerate}
    \item[(i)] $Ty=y$ and $y\in\cconv\{T^nx\}_{n\geq 0}$;
    \item[(ii)] $\frac{1}{n}\sum_{k=0}^{n-1} T^k(x)\to y$;
    \item[(iii)] $\frac{1}{n}\sum_{k=0}^{n-1} T^k(x)\xrightarrow[]{w} y$;
    \item[(iv)] $\left(\frac{1}{n}\sum_{k=0}^{n-1} T^k(x)\right)_{n\geq 1}$ has a subsequence that converges weakly to $y$.
\end{enumerate}
\end{thm}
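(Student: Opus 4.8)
The plan is to prove the cycle $(i)\Rightarrow(ii)\Rightarrow(iii)\Rightarrow(iv)\Rightarrow(i)$, writing $A_nx=\frac1n\sum_{k=0}^{n-1}T^kx$ for the Cesàro averages. Two elementary facts feed every step. First, since $T$ is affine, each $A_n$ is affine and commutes with $T$ on finite convex combinations, which gives the telescoping identity
\[
A_n(Tx)-A_nx=\tfrac1n\bigl(T^nx-x\bigr),
\]
whence $\|A_n(Tx)-A_nx\|\leq \diam(C)/n\to0$, and more generally $\|A_n(T^kx)-A_nx\|\leq 2kM/n$ where $M=\sup_{z\in C}\|z\|<\infty$. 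Second, Cesàro equicontinuity forces each $A_n$ to be norm-continuous, and hence $T=2A_2-\mathrm{id}$ is norm-continuous as well. The implications $(ii)\Rightarrow(iii)\Rightarrow(iv)$ are immediate, so the content is in $(i)\Rightarrow(ii)$ and $(iv)\Rightarrow(i)$.

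For $(i)\Rightarrow(ii)$ I would argue as follows. A fixed point satisfies $A_ny=y$ for all $n$. Given $\varepsilon>0$, equicontinuity provides $\delta>0$ with $\|u-v\|<\delta\Rightarrow\|A_nu-A_nv\|<\varepsilon$ for all $n$. Using $y\in\cconv\{T^nx\}_{n\geq 0}$, choose a finite convex combination $z=\sum_j\lambda_jT^{k_j}x$ with $\|z-y\|<\delta$; then $\|A_nz-y\|=\|A_nz-A_ny\|<\varepsilon$ for every $n$. On the other hand, by affineness of $A_n$ together with the telescoping estimate above, $\|A_nz-A_nx\|\leq 2KM/n\to0$ with $K=\max_jk_j$. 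Combining, $\|A_nx-y\|<2\varepsilon$ for all large $n$, so $A_nx\to y$ in norm.

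The core is $(iv)\Rightarrow(i)$. Suppose $A_{n_j}x\xrightarrow{w}y$. Each $A_nx$ lies in the norm-closed convex set $D:=\cconv\{T^nx\}_{n\geq 0}$, which is therefore weakly closed, so $y\in D$; this is the second half of $(i)$. For the fixed-point half, I would pass to the weak limit in $A_{n_j}(Tx)=A_{n_j}x+\tfrac1{n_j}(T^{n_j}x-x)$: the perturbation tends to $0$ in norm, so $TA_{n_j}x=A_{n_j}(Tx)\xrightarrow{w}y$. It remains to identify this limit as $Ty$, and for this I need the weak sequential continuity of the affine map $T$. For each $f\in X^*$ the real function $g=f\circ T$ is affine and norm-continuous, and such a $g$ is automatically weakly sequentially continuous by a Mazur argument: given $A_{n_j}x\xrightarrow{w}y$, pick convex combinations $c_m$ of the tail $\{A_{n_j}x:j\geq m\}$ with $c_m\to y$ in norm; then $g(c_m)\to g(y)$ by norm-continuity, while $g(c_m)$ equals the corresponding convex combination of the numbers $g(A_{n_j}x)$, which forces $g(A_{n_j}x)\to g(y)$. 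Hence $f(TA_{n_j}x)\to f(Ty)$, and comparing with $f(TA_{n_j}x)\to f(y)$ yields $f(Ty)=f(y)$ for all $f$, i.e. $Ty=y$.

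I expect the main obstacle to be precisely this weak-continuity step: $T$ is only assumed affine and (through equicontinuity) norm-continuous, and a norm-continuous affine function on a non-compact bounded convex set need not be the restriction of a continuous linear functional, so weak continuity cannot be obtained from an extension theorem. The convex-combination (Mazur) trick bypasses any such extension and is the one genuinely nontrivial ingredient; everything else reduces to the telescoping identity, boundedness of $C$, and affineness of the averages.
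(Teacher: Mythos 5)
Your proposal follows essentially the same route as the paper's proof: the same cycle $(i)\implies(ii)\implies(iii)\implies(iv)\implies(i)$, the same telescoping identity $A_n(Tx)-A_nx=\frac1n(T^nx-x)$, the same use of equicontinuity at the fixed point in $(i)\implies(ii)$, and the same weak-limit argument in $(iv)\implies(i)$. (The paper simply asserts there that $T$ is weakly continuous, deferring the Mazur-type justification to a lemma proved only in Section~4; you carry it out inline, and you rightly note that $T=2A_2-\mathrm{id}$ is norm-continuous even though only Ces\`aro equicontinuity is assumed.)

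There is, however, one inference that does not hold as written. In the weak-continuity step you argue: $c_m\to y$ in norm, hence $g(c_m)\to g(y)$, and since $g(c_m)$ is a convex combination of the numbers $g(A_{n_j}x)$ with $j\geq m$, this ``forces'' $g(A_{n_j}x)\to g(y)$. It does not: knowing that \emph{some} convex combination of each tail of a real sequence converges says nothing about the sequence itself (take $a_j=(-1)^j$ and $c_m=\frac12(a_m+a_{m+1})=0$). The standard repair stays entirely inside your framework: argue by contradiction, extract a subsequence of $(A_{n_j}x)_j$ along which, say, $g(A_{n_j}x)\geq g(y)+\varepsilon$; this subsequence still converges weakly to $y$, so Mazur provides convex combinations of it converging to $y$ in norm, and by affinity every such combination has $g$-value at least $g(y)+\varepsilon$, contradicting the norm-continuity of $g$ at $y$. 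With that one-line fix the argument is complete and matches the paper's.
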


\begin{proof}
For simplicity, we write $S_n=\frac{1}{n}\sum_{k=0}^{n-1} T^k$ for all $n\geq 1$. Obviously we have that $(ii)\implies(iii)\implies(iv)$. Suppose that $(iv)$ holds, i.e. $(S_{\phi(n)}(x))_{n\geq 1}$ weakly converges to $y$ for some stricly increasing function $\phi:\mathbb N\to\mathbb N$. It is clear that $y\in\cconv\{T^nx\}_{n\geq 0}$. Note that for all $n\in\mathbb N$ $$T\left(S_{\phi(n)}(x)\right)=S_{\phi(n)}(x)+\frac{1}{\phi(n)}T^{\phi(n)}(x)-\frac{1}{\phi(n)}x$$ and since $C$ is bounded, we deduce that $T\left(S_{\phi(n)}(x)\right)\xrightarrow[]{w} y$. Moreover $T$ is weakly continuous, so $T\left(S_{\phi(n)}(x)\right)\xrightarrow[]{w} T(y)$. It follows that $Ty=y$.

Now suppose that $(i)$ is true and fix $\varepsilon>0$. Since $T$ is Cesaro equicontinuous, there exists $\eta>0$ such that whenever $z\in C$ fulfills $\|z-y\|<\eta$ then $\|S_n(y)-S_n(z)\|<\varepsilon$ for all $n\in\mathbb N$. There exists a convex combination $\sum_{k=0}^pa_kT^k(x)$ such that $\left\|y-\sum_{k=0}^pa_kT^k(x)\right\|<\eta$. Define an affine function on $C$ by $S=\sum_{k=0}^pa_kT^k$. For all $n>p$, one has that 
\begin{align*}
\left\|S_nSx-S_nx\right\|&=\frac{1}{n}\left\|\sum_{k=0}^{n-1}T^k\left(\sum_{j=0}^pa_jT^j(x)\right)-\sum_{k=0}^{n-1} T^k(x)\right\| \\
&=\frac{1}{n}\left\|\sum_{k=0}^{n-1}\sum_{j=0}^pa_jT^{k+j}(x)-\sum_{k=0}^{n-1} T^k(x)\right\| \\
&=\frac{1}{n}\left\|\sum_{j=0}^pa_j\sum_{k=0}^{n-1}\left(T^{k+j}(x)-\sum_{k=0}^{n-1} T^k(x)\right)\right\| \\
&=\frac{1}{n}\left\|\sum_{j=1}^pa_j\sum_{k=n}^{n-1+j}T^{k}(x)\right\|.
\end{align*}
Since $\lim_n\frac{1}{n}T^n(x)=0$ ($C$ is bounded), we deduce that there exists $N>p$ such that $\left\|S_nSx-S_nx\right\|<\varepsilon$ for all $n\geq N$. It follows that $$\|y-S_nx\|=\|S_ny-S_nx\|\leq\|S_ny-S_nSx\|+\|S_nSx-S_nx\|<\varepsilon+\varepsilon=2\varepsilon$$ for all $n\geq N$ since $\|y-Sx\|<\eta$. We conclude that $y=\lim_nS_nx$.
\end{proof}

We obtain the following caracterization of weak compactness:

\begin{thm}\label{weakly_compact}
Let $C$ be a closed convex subset of a Banach space $X$. The following assertions are equivalent:
\begin{enumerate}
    \item[(i)] $C$ is weakly compact;
    \item[(ii)] any closed convex subset of $C$ is ergodic.
\end{enumerate}
\end{thm}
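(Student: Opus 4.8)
The plan is to prove the two implications separately: $(i)\implies(ii)$ will be a quick consequence of the mean ergodic theorem (Theorem \ref{ergo}), while $(ii)\implies(i)$ will be handled by contraposition, manufacturing inside any non-weakly-compact $C$ an explicit non-ergodic closed convex subset built from a spreading model. For $(i)\implies(ii)$, let $D\subseteq C$ be closed and convex. Being convex, $D$ is weakly closed, and as a weakly closed subset of the weakly compact set $C$ it is itself weakly compact. Let $T\colon D\to D$ be Cesaro equicontinuous and affine and fix $x\in D$. Each Cesaro mean $S_n(x)=\frac1n\sum_{k=0}^{n-1}T^k(x)$ is a convex combination of the points $T^k(x)\in D$, hence lies in $D$; by weak compactness $(S_n(x))_n$ has a subsequence converging weakly to some $y\in D$. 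This is exactly assertion $(iv)$ of Theorem \ref{ergo}, so that theorem upgrades it to $S_n(x)\to y$ in norm. Thus every such $T$ is ergodic and $D$ is ergodic; no obstacle arises here.

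For $(ii)\implies(i)$ I argue by contraposition and assume $C$ is not weakly compact; since weak compactness forces boundedness, the main case is $C$ bounded. By the Eberlein--\v{S}mulian theorem there is a sequence $(x_n)_n$ in $C$ with no weak cluster point. After passing to a good subsequence (Ramsey) and then, by a diagonal argument based on (\ref{fr}), to a $(1+\varepsilon)$-spreading subsequence, I may assume $(x_n)_n$ is linearly independent and generates a spreading model $Z$ whose fundamental sequence $(e_n)_n$ obeys (\ref{fr}) uniformly. Because $(x_n)_n$ has no weak cluster point, one checks (e.g.\ via Rosenthal's $\ell_1$ theorem, splitting into an $\ell_1$-subsequence and a weakly Cauchy non-convergent subsequence) that $(e_n)_n$ is \emph{not} weakly convergent; hence, by Proposition \ref{shift_fixedpoint}, the shift $T$ of $Z$ has no fixed point on $\cconv\{e_n\}_n$. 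As $T$ is a linear isometry, $\norm{S_n}\leq1$, so $T$ is Cesaro equicontinuous, and Theorem \ref{ergo} shows $(S_n(e_1))_n$ cannot converge (a limit would be a fixed point). Thus $\cconv\{e_n\}_n$ is not ergodic.

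The main obstacle is that $\cconv\{e_n\}_n$ lives in the abstract spreading model $Z$, not in $C$, whereas $(ii)$ concerns honest closed convex subsets of $C$. I resolve this by transporting the shift to $D=\cconv\{x_n\}_n\subseteq C$, taken as the affine extension of $x_n\mapsto x_{n+1}$ (well defined by linear independence). The delicate point, and the technical heart of the proof, is Cesaro equicontinuity of this transported map: here (\ref{fr}) is what makes it work, since $\norm{T^k u}$ does \emph{not} compound with $k$ but stays within a factor $\frac{1+\varepsilon}{1-\varepsilon}$ of $\norm{u}$, because the exactly isometric shift of $Z$ gives $\|\sum c_i e_{i+k}\|=\|\sum c_i e_i\|$ for every $k$; this bounds $\norm{S_n}$ uniformly in $n$. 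Once this is in hand, a convergent Cesaro mean $S_n(x_1)\to y$ would, by Theorem \ref{ergo}, give a shift-fixed point $y\in D$; pushing $y=T^ky$ into every tail $\overline{\co}\{x_i\}_{i\geq m}$ and comparing with $Z$ through (\ref{fr}) contradicts the non-convergence already established in the model. Hence $D$ is a non-ergodic closed convex subset of $C$. I emphasize that one cannot shortcut this transport through finite representability: Corollary \ref{fr_sm} only yields that $\cconv\{e_n\}_n$ is f.r.\ in $C$, which is precisely what powers the super-ergodic characterization (Theorem \ref{super-ergodic}) rather than the present statement. Finally, when $C$ is unbounded one picks an unbounded sequence $(v_n)_n\subseteq C$ and runs the same shift $v_n\mapsto v_{n+1}$ on $\cconv\{v_n\}_n$; its Cesaro means at a single generator have norms tending to infinity, so this subset is again non-ergodic.
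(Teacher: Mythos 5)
Your direction $(i)\implies(ii)$ is correct and is exactly the paper's argument: weak compactness of the closed convex subset gives a weakly convergent subsequence of the Cesaro means, and Theorem \ref{ergo} upgrades this to norm convergence. The problem is in $(ii)\implies(i)$, where your transport of the shift from the spreading model back into $C$ has a genuine gap. The estimate (\ref{fr}) is only asymptotic: for each $\varepsilon>0$ and each \emph{fixed} number $N$ of terms it produces a threshold $p=p(\varepsilon,N)$ and compares $\norm{\sum_{i=1}^N a_ix_{n_i}}$ with $\norm{\sum_{i=1}^N a_ie_i}$ only when $n_1\geq p$. It gives no two-sided comparison, uniform over all finite convex combinations, between $\norm{\sum a_ix_i}$ and the spreading-model norm; in particular it says nothing about combinations involving small indices, and the threshold grows with $N$. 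Consequently your key claim --- that $\norm{T^ku}$ stays within a factor $\frac{1+\varepsilon}{1-\varepsilon}$ of $\norm{u}$ because the shift is isometric in $Z$ --- does not follow. Worse, the affine extension of $x_n\mapsto x_{n+1}$, while well defined on the algebraic convex hull $\co\{x_n\}_n$ by linear independence, need not even be uniformly continuous there (nothing prevents $\norm{x_1-x_2}$ from being tiny while $\norm{x_2-x_3}$ is bounded below), so it may not extend to the closed convex hull at all; and statement $(ii)$ requires a \emph{closed} convex subset carrying a Cesaro equicontinuous affine self-map. You correctly identified that finite representability alone cannot be used here (that is what powers Theorem \ref{super-ergodic}, not this theorem), but the replacement you propose does not close the gap.

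The paper takes a different and essentially unavoidable route for this direction: by Proposition 1 of \cite{Bena}, a non-weakly-compact closed convex set contains a basic sequence $(y_n)_n$ whose ``positive face'' $A=\left\{\sum_{n}a_ny_n : a_n\geq 0,\ \sum_n a_n=1\right\}$ is affinely \emph{homeomorphic} to the positive face of the unit ball of $\ell_1$. One then conjugates a fixed-point-free bilateral shift $S$ on $\ell_1$ through this homeomorphism $\Phi$ to get $T=\Phi^{-1}S\Phi:A\to A$, which is Cesaro equicontinuous because $\bigl\|\frac{1}{n}\sum_{k=0}^{n-1}S^k\bigr\|\leq 1$ and $\Phi$, $\Phi^{-1}$ are (uniformly) continuous on these sets, and which has no fixed point by Theorem 3.2 of \cite{Bena}; Theorem \ref{ergo} then shows $T$ is not ergodic. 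The affine homeomorphism is precisely the uniform, all-indices control that the spreading-model construction lacks. Your remark on the unbounded case has the same defect: the shift $v_n\mapsto v_{n+1}$ must be shown to be a well-defined Cesaro equicontinuous affine self-map of a closed convex set before its divergent Cesaro means witness non-ergodicity.
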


\begin{proof}
$(i)\implies (ii)$ follows directly from the previous theorem. Now suppose that $C$ is not weakly compact. By Proposition 1 in \cite{Bena}, there exists a basic sequence $(y_n)_n\subset C$ and an affine homeomorphism $\Phi:A\to B$ such that $\Phi(y_n)=e_n$ for all $n\in\mathbb N$ where $$A:=\left\{\sum_{n=1}^\infty a_ny_n\ |\ a_n\geq 0\ \text{and}\ \sum_{n=1}^\infty a_n=1\right\}$$ and $$B:=\left\{\sum_{n=1}^\infty a_ne_n\ |\ a_n\geq 0\ \text{and}\ \sum_{n=1}^\infty a_n=1\right\}$$ with $(e_n)_n$ the canonical basis of $l_1$. Define the bilateral shift on $l_1$ by $$S\left(\sum_{n=1}^\infty a_ne_n\right)=a_2e_1+\sum_{n=1}^\infty a_{2n-1}e_{2n+1}+\sum_{n=2}^\infty a_{2n}e_{2n+2}.$$ Finally we define an affine continous mapping by $T=\Phi^{-1}S\Phi:A\to A$. It is clear that $T$ is Cesaro equicontinuous. In fact, we have that $\frac{1}{n}\sum_{k=0}^{n-1}T^k=\Phi^{-1}\left(\frac{1}{n}\sum_{k=0}^{n-1}S^k\right)\Phi$ with $\left\|\frac{1}{n}\sum_{k=0}^{n-1}S^k\right\|\leq 1$ for all $n\geq 1$. Moreover, it is proved in Theorem 3.2 of \cite{Bena} that $T$ does not have any fixed point. By Theorem \ref{ergo}, we deduce that $T$ is not ergodic. So $A$ is a non-ergodic subset of $C$ and the proof is complete.
\end{proof}

Now we prove the super-version of the previous theorem:

\begin{thm}\label{super-ergodic}
Let $C$ be a closed convex subset of a Banach space $X$. The following assertions are equivalent:
\begin{enumerate}
    \item[(i)] $C$ is SWC;
    \item[(ii)] $C$ is super-ergodic;
    \item[(iii)] any affine isometry from a set f.r. in $C$ into itself is ergodic.
\end{enumerate}
\end{thm}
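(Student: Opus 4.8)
The plan is to prove the cycle $(i)\Rightarrow(ii)\Rightarrow(iii)\Rightarrow(i)$, where the first two implications are soft and the last one carries all the difficulty.

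For $(i)\Rightarrow(ii)$, let $D$ be a convex set f.r. in $C$. Since $C$ is SWC, $D$ is relatively weakly compact, so $\overline{D}$ is a weakly compact convex set. Given a Cesaro equicontinuous affine $T\colon D\to D$ and $x\in D$, the means $S_n x=\frac1n\sum_{k=0}^{n-1}T^k x$ lie in the weakly compact $\overline{D}$, hence a subsequence converges weakly, and Theorem \ref{ergo}, implication $(iv)\Rightarrow(ii)$, upgrades this to norm convergence. (The only subtlety is that the weak limit lies in $\overline{D}$ rather than $D$; this is handled by extending $T$ to $\overline{D}$, which is routine since Cesaro equicontinuity makes $T=2S_2-I$ Lipschitz, and then invoking Theorem \ref{weakly_compact} on $\overline{D}$.) Thus every convex set f.r. in $C$ is ergodic, i.e. $C$ is super-ergodic. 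The implication $(ii)\Rightarrow(iii)$ is immediate: an affine isometry $T$ of a convex set $D$ f.r. in $C$ has $1$-Lipschitz Cesaro means, since each $T^k$ preserves distances, so $T$ is Cesaro equicontinuous, and super-ergodicity of $C$ makes $D$, hence $T$, ergodic.

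The heart of the matter is $(iii)\Rightarrow(i)$, which I would prove by contraposition. Assume $C$ is not SWC. Then some bounded set f.r. in $C$ fails to be relatively weakly compact, so it contains a sequence $(x_n)$ with no weakly convergent subsequence. Passing to a good subsequence via Ramsey, I form the spreading model $Z$ with spreading basis $(e_n)$ and its shift $T$, a linear isometry with $T(\cconv\{e_n\}_n)\subset\cconv\{e_n\}_n$; thus $T\restriction_{\cconv\{e_n\}_n}$ is an affine isometry of $\cconv\{e_n\}_n$ into itself. By Corollary \ref{fr_sm}, transitivity of finite representability, and convexity of $C$ (which makes $\co\{x_n\}_n$ f.r. in $C$, since finite convex combinations of the images of the $x_i$ stay in the convex set $C$), the set $\cconv\{e_n\}_n$ is f.r. in $C$. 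Now if $(e_n)_n$ is not weakly convergent, Proposition \ref{shift_fixedpoint} says $T\restriction_{\cconv\{e_n\}_n}$ has no fixed point in $\cconv\{e_n\}_n$; since $S_n(e_1)=\frac1n\sum_{j=1}^n e_j$ and any limit of these would, by Theorem \ref{ergo}, be a fixed point of $T$ lying in $\cconv\{T^k e_1\}_{k\geq 0}\subset\cconv\{e_n\}_n$, the map $T\restriction_{\cconv\{e_n\}_n}$ cannot be ergodic, contradicting $(iii)$.

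The hard part will be arranging that $(e_n)_n$ is \emph{not} weakly convergent. Here I would split via Rosenthal's $\ell_1$ theorem. If $(x_n)_n$ has an $\ell_1$-subsequence, the spreading basis is $\ell_1$-equivalent, hence not even weakly Cauchy, and we are done. Otherwise $(x_n)_n$ has a nontrivial weakly Cauchy subsequence with weak-star limit $x^{**}\in X^{**}\setminus X$, so $\|x^{**}\|>0$. Two things must then be ruled out. First, $(e_n)_n$ is not weakly null: by (\ref{fr}) one has $\|\frac1n\sum_{j=1}^n e_j\|=\lim\|\frac1n\sum_{j=1}^n x_{m_j}\|$ over spread-out indices, and these averages converge weak-star to $x^{**}$, so weak-star lower semicontinuity of the norm bounds them below by $\|x^{**}\|>0$. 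Second, a nonzero weak limit is excluded: weak convergence of $(e_n)_n$ would force, through Theorem \ref{ergo}, norm convergence of the averages $\frac1n\sum_{j=1}^n e_j$, which transports via finite representability to norm-convergent averages of $(x_n)_n$; their limit would have to agree with the weak-star limit $x^{**}$, placing $x^{**}$ in $X$, a contradiction. The delicate step I expect to require the most care is exactly this transport: matching the norm-Cauchy averages in $Z$ with genuine averages in $X$ coherently enough (e.g. by a diagonal choice of indices) to identify the limit with $x^{**}$.
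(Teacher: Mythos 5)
Your architecture is the same as the paper's: $(i)\Rightarrow(ii)\Rightarrow(iii)$ are handled softly via Theorems \ref{weakly_compact} and \ref{ergo}, and for $\neg(i)\Rightarrow\neg(iii)$ both you and the paper extract from a non--relatively weakly compact set f.r.\ in $C$ a sequence $(x_n)$ with no weakly convergent subsequence (the paper goes through $C^{\mathcal U}$ via Theorem \ref{thm.fr}, you use the definition of SWC directly---immaterial), build the spreading model, use its shift restricted to $\cconv\{e_n\}_n$ as the offending affine isometry, and split according to whether one is in the $\ell_1$ situation. The $\ell_1$ case and the reduction ``no fixed point $\Rightarrow$ not ergodic'' via Proposition \ref{shift_fixedpoint} and Theorem \ref{ergo} are fine. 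The divergence is in the remaining case: the paper closes it by citing two results of Beauzamy--Lapresté (Prop.~4 p.~21 and Thm.~3 p.~25 of \cite{Beauzamy}), whereas you attempt to prove the needed implication from scratch via the weak$^*$ limit $x^{**}\in Y^{**}\setminus Y$ of a weakly Cauchy subsequence.

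That attempted proof has a concrete gap, exactly at the step you flag as delicate. The estimate (\ref{fr}) holds for $n_1<\dots<n_N$ only when $n_1\geq p$, where $p$ depends on $N$ (and $\varepsilon$). Hence no single fixed subsequence of $(x_n)$ is ``spread out enough'' for all lengths $N$ simultaneously, and a ``diagonal choice of indices'' does not produce one norm-Cauchy sequence of averages in the ambient space whose limit can be identified with $x^{**}$: for each pair $(N,M)$ you obtain close averages, but only over index sets that must recede as $M$ grows, so the candidate limit keeps moving and you never pin down a single element of $Y$ equal to $x^{**}$. The step is repairable, but by a different mechanism than the one you describe: from $S_N\to e$ in norm and $Te=e$ one gets $\|S_N-T^NS_N\|\leq 2\|S_N-e\|\to 0$, and $S_N-T^NS_N=\frac1N\sum_{i=1}^N e_i-\frac1N\sum_{i=N+1}^{2N}e_i$ involves a tuple of \emph{fixed} length $2N$, so (\ref{fr}) transports it to show that the set of averages $\frac1N\sum_{i=1}^N x_{n_i}$ over sufficiently spread-out $N$-tuples has norm-diameter $O(\|S_N-e\|)$; since $x^{**}$ lies in the weak$^*$ closure of that set, weak$^*$ lower semicontinuity of the norm yields $\dist(x^{**},Y)=O(\|S_N-e\|)\to 0$, the desired contradiction. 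As written, your proposal leaves open precisely the point that the paper does not prove either but delegates to \cite{Beauzamy}.
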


\begin{proof}
$(i)\implies (ii)$ follows directly from the previous theorem. $(ii)\implies (iii)$ is obvious.

Suppose that $C$ is not SWC and let us show that $(iii)$ does not hold. There exists a free ultrafilter $\mathcal U$ on $\mathbb N$ such that $C^\mathcal U$ is not weakly compact. So there exists a sequence $(x_n)_n\in C^\mathcal U$ without any convergent subsequence. Taking subsequence if necessary, we can suppose that $(x_n)_n$ is a good sequence. Let $Z$ be the spreading model built on $(x_n)_n$ with fundamental sequence $(e_n)_n$ and consider let $T$ be the shift of $Z$.

If $(e_n)_n$ is equivalent to the canonical basis of $l_1$ then the mean sequence $(\frac{1}{n}\sum_{k=0}^{n-1}T^k(e_1))_n$ does not converge since $\frac{1}{n}\sum_{k=0}^{n-1}T^k(e_1)=\frac{1}{n}\sum_{k=0}^{n-1}e_k$. Remember that $\cconv\{e_n\}_n$ is  f.r. in $\cconv\{x_n\}_n\subset C^\mathcal U$ by Corollary \ref{fr_sm} and that $C^\mathcal U$ is f.r. in $C$ by Proposition \ref{fr1}. Since $\cconv\{e_n\}_n$ is not ergodic, it follows that $(ii)$ does not hold.

Now we suppose that $(e_n)_n$ is not equivalent to the canonical basis of $l_1$. To conclude, we are going to show again that the Cesaro mean sequence $(\frac{1}{n}\sum_{k=0}^{n-1}T^k(e_1))_n$ can not converge. Suppose on the contrary that this sequence converges. This implies that $(e_n)_n$ weakly converges (see Proposition 4 p.21 in \cite{Beauzamy}). Since $(e_n)_n$ is not equivalent to the canonical basis of $l_1$, it follows that $(x_n)_n$ weakly converges (by Theorem 3 p.25 in \cite{Beauzamy}) which is a contradiction.
\end{proof}

\begin{thm}
Super-reflexivity is equivalent to super-ergodicity.
\end{thm}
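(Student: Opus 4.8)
The plan is to read ``super-ergodic'' for a Banach space $X$ through the super-property framework of Section 2: $X$ is super-ergodic if every Banach space finitely representable in $X$ is ergodic (in the sense that each of its Cesàro bounded operators is ergodic). With this reading the statement becomes a bridge between the set-level Theorem \ref{super-ergodic} and the space-level Proposition \ref{ergo_Banach}. I would prove the two implications separately, the whole argument hinging on the fact that $X$ is super-reflexive if and only if $B_X$ is SWC.

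For the forward implication I would start from $X$ super-reflexive, so that $B_X$ is SWC, and apply Theorem \ref{super-ergodic} to the closed convex set $C=B_X$ to obtain that $B_X$ is super-ergodic; that is, every convex set finitely representable in $B_X$ is ergodic. Now if $Y$ is any Banach space f.r. in $X$, then $B_Y$ is f.r. in $B_X$ (by the remark identifying finite representability of spaces with that of their unit balls), and $B_Y$ is convex, hence ergodic as a set. By Proposition \ref{ergo_Banach} this is precisely the ergodicity of $Y$. Thus every space f.r. in $X$ is ergodic, i.e. $X$ is super-ergodic.

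For the converse I would argue by contraposition and recycle the construction in the proof of Theorem \ref{super-ergodic}. Assume $X$ is not super-reflexive, so $B_X$ is not SWC. Then there is a free ultrafilter $\mathcal U$ for which $B_X^{\mathcal U}$ is not weakly compact, and after passing to a good subsequence one builds a spreading model $Z$ of $X^{\mathcal U}$ with fundamental sequence $(e_n)_n$ and shift $T$. Exactly as there, in both the $\ell_1$ and the non-$\ell_1$ case the Cesàro averages $\frac1n\sum_{k=0}^{n-1}T^k(e_1)$ fail to converge. The new point is to view $T$ not merely on $\cconv\{e_n\}_n$ but as a bounded linear operator on the whole Banach space $Z$: being a linear isometry it satisfies $\left\|\frac1n\sum_{k=0}^{n-1}T^k\right\|\leq 1$, so it is Cesàro bounded, and since its Cesàro means already fail to converge at the point $e_1\in Z$ it is not ergodic. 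Hence $Z$ is not an ergodic Banach space. Finally $Z$ is f.r. in $X^{\mathcal U}$ (as a spreading model of $X^{\mathcal U}$) and $X^{\mathcal U}$ is f.r. in $X$, so by transitivity of finite representability $Z$ is f.r. in $X$; therefore $X$ admits a non-ergodic space finitely representable in it and fails to be super-ergodic.

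The main obstacle — really the only step that is not bookkeeping — is this last upgrade: one must observe that the obstruction produced in Theorem \ref{super-ergodic}, originally a non-convergence statement on the convex hull $\cconv\{e_n\}_n$, promotes to genuine non-ergodicity of the Banach space $Z$, and this works precisely because the shift is a linear isometry (hence automatically Cesàro bounded) and the offending point $e_1$ already lies in $Z$. Everything else is routine: the equivalence of super-reflexivity with SWC-ness of $B_X$, the unit-ball form and transitivity of finite representability, and Proposition \ref{ergo_Banach}.
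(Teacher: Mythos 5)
Your proposal is correct, and it is worth noting that it does not merely follow the paper's argument --- it actually supplies the one direction that the paper's printed proof handles only obliquely. The paper proves ``super-reflexive $\Rightarrow$ super-ergodic'' by contraposition: a non-ergodic $Y$ f.r.\ in $X$ has a non-ergodic ball by Proposition \ref{ergo_Banach}, hence a non-weakly-compact ball by Theorem \ref{weakly_compact}, hence $X$ is not super-reflexive; its second half then starts from ``$X$ super-ergodic'' and only re-derives that every $Y$ f.r.\ in $X$ is ergodic, so the implication ``super-ergodic $\Rightarrow$ super-reflexive'' is never argued explicitly (the hypothesis there reads as if it should be ``super-reflexive''). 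Your forward direction is a clean direct route through Theorem \ref{super-ergodic} applied to $B_X$, the identification of finite representability of spaces with that of their unit balls, and Proposition \ref{ergo_Banach}; this is equivalent in substance to what the paper does. Your converse is the genuinely new (and needed) part: rerunning the spreading-model construction from the proof of Theorem \ref{super-ergodic} on $B_X^{\mathcal U}$ and observing that the shift, being a linear isometry of the whole spreading model $Z$, is Cesàro bounded as an operator on $Z$ while its Cesàro means at $e_1$ diverge in both the $\ell_1$ and non-$\ell_1$ cases, so that the obstruction on $\cconv\{e_n\}_n$ promotes to non-ergodicity of the Banach space $Z$, which is f.r.\ in $X$ by transitivity. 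All the ingredients you invoke (the equivalence of super-reflexivity with $B_X$ being SWC, Theorem \ref{thm.fr}, finite representability of spreading models, and the two Beauzamy lemmas used in Theorem \ref{super-ergodic}) are available in the paper, so the argument is complete; it buys a self-contained proof of the harder implication at the modest cost of repeating the spreading-model case analysis rather than quoting Theorem \ref{weakly_compact}.
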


\begin{proof}
Suppose that $X$ is not super-ergodic and let $Y$ be a Banach space f.r. in $X$ which is not ergodic. So $B_Y$ is not ergodic. By corollary \ref{weakly_compact}, it follows that $B_Y$ is not weakly compact. So $Y$ is not reflexive and $X$ is not super-reflexive. Now suppose that $X$ is super-ergodic and let $Y$ be a Banach space which is f.r. in $X$. Then $B_Y$ is f.r. in $B_X$ and then $B_Y$ is ergodic by the previous theorem. So $Y$ is ergodic by Proposition \ref{ergo_Banach}.
%The other implication is easily proved following the same steps as in the previous Theorem.
\end{proof}

\section{Fixed point property and (super) weak compactness}

The objective of this section is to generalize Theorem 3.6 in \cite{SWC}. The authors proved that a closed bounded convex subset $C$ of a Banach space $X$ is SWC if and only if any affine isometry $T:C\to C$ which can be extended to an affine isometry on $X$ has a fixed point. However, it could exist affine isometries on $C$ without any affine isometric extension to $X$. That is why we propose a intrinsic characterization. 

\begin{defi}
Let $\mathcal C$ be a class of convex mappings. We say that a closed convex bounded subset $C$ of a Banach space $X$ has \textit{the fixed point property} (FPP in short) for $\mathcal C$, if every mapping from $C$ into itself belonging to $\mathcal C$ has a fixed point. We say that $C$ has the \textit{super-FPP} for $\mathcal C$ if any convex set which is f.r. in $C$ has the FPP for $\mathcal C$. Finally, if any closed convex bounded subset of $X$ has the FPP for $\mathcal C$, we say that $X$ has the FPP for $\mathcal C$.
\end{defi}

\begin{lema}
Let $C$ be a bounded closed convex subset of a Banach space $X$. If $T:C\to C$ is an affine continuous mapping then $T$ is weakly continuous.
\end{lema}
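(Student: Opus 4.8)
The plan is to reduce the weak continuity of the vector-valued map $T$ to the weak continuity of the scalar affine functions $f\circ T$ for $f\in X^*$, and then to treat each such scalar function by a level-set argument resting on Mazur's theorem.

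First I would observe that the weak topology on the target copy of $C$ is the initial topology induced by the functionals $f\in X^*$; hence $T\colon (C,w)\to (C,w)$ is continuous if and only if $f\circ T\colon (C,w)\to\mathbb R$ is continuous for every $f\in X^*$. It therefore suffices to fix $f\in X^*$ and show that $\varphi:=f\circ T\colon C\to\mathbb R$ is weakly continuous. Now $\varphi$ is norm-continuous, being the composition of the norm-continuous $T$ with the continuous functional $f$, and it is affine, since $T$ is affine and $f$ is linear. The crucial point is that, \emph{because $\varphi$ is affine} (and not merely convex), both its superlevel set $\{x\in C:\varphi(x)\geq t\}$ and its sublevel set $\{x\in C:\varphi(x)\leq t\}$ are convex. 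These sets are also norm-closed in $C$ by continuity of $\varphi$, and since $C$ is norm-closed in $X$ they are norm-closed convex subsets of $X$, hence weakly closed by Mazur's theorem. Consequently $\{x\in C:\varphi(x)>t\}$ and $\{x\in C:\varphi(x)<t\}$ are relatively weakly open in $C$ for every $t\in\mathbb R$; as these generate the topology of $\mathbb R$, the function $\varphi$ is weakly continuous. Equivalently, one may note that $\varphi$ and $-\varphi$ are both convex and norm-lower-semicontinuous, hence weakly lower-semicontinuous, so that $\varphi$ is weakly continuous.

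The only point that deserves genuine care, and where I expect the real work to lie, is resisting the temptation to extend $T$ to a bounded map of the form ``linear part plus constant'' on $\operatorname{span}(C-x_0)$: such an extension need not be well-defined or bounded when $C$ is ``thin'', and would in any case introduce irrelevant complications. Working directly with the convex level sets of the scalar functions $f\circ T$ avoids this altogether. The argument uses only affineness of $T$ (to obtain convexity of \emph{both} the sublevel and the superlevel sets, which is what delivers continuity rather than mere semicontinuity), norm-continuity of $T$ (to obtain norm-closedness of the level sets), and norm-closedness of $C$ (to pass from closedness in $C$ to closedness in $X$); in particular boundedness of $C$ plays no role in this step.
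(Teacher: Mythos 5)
Your proof is correct, but it takes a different route from the paper's. You reduce the statement to the scalar case: weak-to-weak continuity of $T$ is equivalent to weak continuity of $f\circ T$ for every $f\in X^*$, and each such $\varphi=f\circ T$ is affine and norm-continuous, so its sublevel and superlevel sets are convex and norm-closed, hence weakly closed by Mazur's theorem; the strict level sets are then relatively weakly open and generate the topology of $\mathbb R$, so $\varphi$ is weakly continuous (equivalently, $\varphi$ and $-\varphi$ are convex and norm-lower-semicontinuous, hence weakly lower-semicontinuous). The paper instead argues by contradiction with nets: it takes a net $(x_a)$ weakly converging to $x$ with $T(x_a)$ eventually outside some subbasic weak neighborhood $U_{i_0}$ of $T(x)$, uses Mazur's theorem to produce a sequence $(y_n)$ in $\co\{x_b\}$ norm-converging to $x$, and then uses norm-continuity of $T$ together with affineness and the convexity of $U_{i_0}^c$ to force $T(y_n)\notin U_{i_0}$ while $T(y_n)\to T(x)$, a contradiction. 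Both arguments rest on the same fact (norm-closed convex sets are weakly closed), but your level-set formulation avoids nets and subnets entirely and makes transparent exactly which hypotheses are used where; as you correctly note, boundedness of $C$ is not needed in either version. The paper's net argument has the minor advantage of not passing through the scalar reduction, but is otherwise longer. No gaps.
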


\begin{proof}
Let $(x_a)_{a\in A}$ be a net in $C$ that weakly converges to some $x\in C$. Suppose that $(T(x_a))_{a\in A}$ does not weakly converge to $T(x)$. Then there exists a weak open neighborhood $V$ of $T(x)$ and a subnet $(x_b)_{b\in B}$ of $(x_a)_{a\in A}$ such that $T(x_b)\notin V$ for all $b\in B$. We can write $V=\bigcap_{i=1}^pU_i$ with $U_i=\{y\in X\ |\ x^*_i(y-T(x))<\varepsilon\}$. So, by taking another subnet if necessary, we can suppose that there exists $i_0\in\{1,...,p\}$ such that $x_b\notin U_{i_0}$ for all $b\in B$. Since $(x_b)_{b\in B}$ weakly converges to $x$, we have that $x\in\cconv\{x_b\}_{b\in B}$ and then there exists a sequence $(y_n)_{n\in\mathbb N}\subset\co\{x_b\}_{b\in B}$ such that $y_n\to x$. By continuity of $T$, we have that $T(y_n)\to T(x)$. However using that $T$ is affine, the convexity of $U_i^c$ and the fact that $T(x_b)\notin U_{i_0}$ for all $b\in B$, it is easy to see that $T(y_n)\notin U_{i_0}$ for all $n\in\mathbb N$. This is a contradiction and the proof is complete.
\end{proof}

\begin{lema}\label{weak_continuous}
Let $C$ be a convex subset of a Banach space $(X,\|.\|)$ such that $\|.\|^2$ is uniformly convex on $C$. If $D$ is a convex subset of a Banach space $(Y,|.|)$ which is f.r. in $C$, then $|.|^2$ is uniformly convex on $D$.
\end{lema}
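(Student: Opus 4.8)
The plan is to produce directly a positive modulus of convexity for $|\cdot|^2$ on $D$. Write $\delta$ for the modulus of convexity of $\|\cdot\|^2$ on $C$, which is positive on $(0,\infty)$ by hypothesis and nondecreasing, since it is defined as an infimum over a constraint set that shrinks as $\varepsilon$ grows. Fix $\varepsilon>0$ and take $x,y\in D$ with $|x-y|\geq\varepsilon$; the goal is to bound the defect $\Delta(x,y):=\tfrac{|x|^2+|y|^2}{2}-\bigl|\tfrac{x+y}{2}\bigr|^2$ from below by a positive quantity depending only on $\varepsilon$.

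First I would dispose of the degenerate case in which $x$ and $y$ are linearly dependent. Then $x$, $y$ and $\tfrac{x+y}{2}$ all lie on a line through the origin, on which the norm behaves like an absolute value, and a one-line computation gives the exact identity $\Delta(x,y)=\tfrac14|x-y|^2\geq\tfrac14\varepsilon^2$. No finite representability is needed here.

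For the main case, $\{x,y\}$ is linearly independent, so I may apply the finite representability of $D$ in $C$ to the set $A_0=\{x,y\}$. The decisive freedom is that the accuracy of the embedding may be chosen after seeing the pair: setting $M=\max\{|x|^2,|y|^2\}$, I pick $\eta\in(0,\tfrac12]$ small enough that $6\eta M\leq\tfrac12\,\delta(\varepsilon/2)$. Finite representability then yields $T\colon\operatorname{span}\{x,y\}\to X$ with $T(x),T(y)\in C$ and $(1-\eta)|v|\leq\|T(v)\|\leq(1+\eta)|v|$ for every $v\in\operatorname{span}\{x,y\}$. Since $C$ is convex, $\tfrac{T(x)+T(y)}{2}=T\bigl(\tfrac{x+y}{2}\bigr)\in C$ and $\|T(x)-T(y)\|\geq(1-\eta)\varepsilon\geq\varepsilon/2$, so uniform convexity on $C$ gives $\tfrac{\|T(x)\|^2+\|T(y)\|^2}{2}-\bigl\|\tfrac{T(x)+T(y)}{2}\bigr\|^2\geq\delta(\varepsilon/2)$. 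Replacing each squared norm by its $(1\pm\eta)^2$-estimate in terms of the corresponding value $|x|^2$, $|y|^2$, $\bigl|\tfrac{x+y}{2}\bigr|^2$ and collecting the errors, which are bounded by $6\eta M$ (using that both $\tfrac{|x|^2+|y|^2}{2}$ and $\bigl|\tfrac{x+y}{2}\bigr|^2$ are at most $M$), I obtain $\Delta(x,y)\geq\delta(\varepsilon/2)-6\eta M\geq\tfrac12\,\delta(\varepsilon/2)$.

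The crucial and perhaps counterintuitive feature, which is also the main obstacle, is that the multiplicative distortion of the almost-isometry turns into additive errors proportional to the magnitudes $|x|^2$, $|y|^2$, so a single fixed $\eta$ would fail to control them when $D$ is unbounded; choosing $\eta$ depending on $M$ resolves this precisely because the resulting lower bound $\tfrac12\,\delta(\varepsilon/2)$ no longer involves $M$. Combining the two cases gives $\Delta(x,y)\geq\min\{\tfrac14\varepsilon^2,\,\tfrac12\,\delta(\varepsilon/2)\}>0$ uniformly over all admissible pairs, which furnishes the desired positive modulus for $|\cdot|^2$ on $D$.
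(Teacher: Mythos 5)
Your proof is correct and follows essentially the same route as the paper: split into the linearly dependent case (handled by the exact identity for squares on a line) and the linearly independent case, where finite representability supplies almost-isometries and the modulus is evaluated at $\varepsilon/2$ to absorb the distortion. The only cosmetic difference is that the paper lets the distortion parameter tend to $0$ along a sequence $T_n$ and passes to the limit for each fixed pair, whereas you make a single quantitative choice of $\eta$ depending on $\max\{|x|^2,|y|^2\}$; both devices resolve the same issue in the same way.
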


\begin{proof}
Define $\delta(t)=\min\{\delta_{\|.\|^2}\left(\frac{t}{2}\right),\delta_{g}(t)\}>0$ where $g(s)=s^2$ for all $s\in\mathbb R$. Let $x,y\in D$. Suppose first that $x$ and $y$ are linearly independent. For all $n\in\mathbb N$, there exist $C_n\subset C$ and an isomorphism $T_n:\text{span}\{x,y\}\to\text{span}(C_n)$ such that $T_n(x),T_n(y)\in C$ and $$\left(1-\frac{1}{n}\right)|z|\leq\|T_n(z)\|\leq\left(1+\frac{1}{n}\right)|z|$$ for all $z\in\text{span}\{x,y\}$. For all $n\geq 2$, it follows that
\begin{align*}
\left(1-\frac{1}{n}\right)^2\left|\frac{x+y}{2}\right|^2&\leq\left\|\frac{T_n(x)+T_n(y)}{2}\right\|^2\\
&\leq\frac{\|T_n(x)\|^2+\|T_n(y)\|^2}{2}-\delta_{\|.\|^2}(\|T_n(x)-T_n(y)\|)\\
&\leq\left(1+\frac{1}{n}\right)^2\frac{|x|^2+|y|^2}{2}-\delta_{\|.\|^2}\left(\frac{|x-y|}{2}\right)
\end{align*}
and letting $n\to\infty$ we obtain that $$\left|\frac{x+y}{2}\right|^2\leq\frac{|x|^2+|y|^2}{2}-\delta_{\|.\|^2}\left(\frac{|x-y|}{2}\right)\leq\frac{|x|^2+|y|^2}{2}-\delta(|x-y|).$$ Now if $x$ and $y$ are linearly dependant, one can easily prove that $$\left|\frac{x+y}{2}\right|^2\leq\frac{|x|^2+|y|^2}{2}-\delta_g(|x-y|)\leq\frac{|x|^2+|y|^2}{2}-\delta(|x-y|)$$ and the proof is complete.
\end{proof}

Let $C$ be a closed convex subset of a Banach space $X$. We recall that a point $x\in C$ is \textit{diametral} if $\sup\{\|x-y\| : y\in D\}=\text{diam}(D)$. We say that $C$ has \textit{normal structure} (see \cite{Kirk}) if any bounded closed convex subset $D$ of $C$ containing more than one point has a point which is not diametral.

\begin{lema}\label{fr_unifconv}
Suppose that $C$ is a convex subset of a Banach space $X$ such that $\|.\|^2$ is uniformly convex on $C$. Then $C$ has normal structure.
\end{lema}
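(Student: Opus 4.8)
The plan is to prove normal structure by the classical midpoint (center) argument, using the uniform convexity of $\|\cdot\|^2$ to produce, inside every nontrivial bounded closed convex subset, an explicit non-diametral point. So fix a bounded closed convex set $D\subseteq C$ containing more than one point and put $d=\diam(D)>0$. Since $d=\sup\{\|u-v\|:u,v\in D\}$, I can choose $x,y\in D$ with $\|x-y\|\geq d/2$, and I take as candidate center the midpoint $m=\frac{x+y}{2}$, which lies in $D$ by convexity. Writing $\delta=\delta_{\|\cdot\|^2}$ for the modulus of convexity of $\|\cdot\|^2$ on $C$ (a nondecreasing function with $\delta(t)>0$ for every $t>0$, by the hypothesis), the goal is to show that $m$ is not diametral.

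The key estimate I want is that for every $z\in D$,
\[
\|m-z\|^2\leq\frac{\|x-z\|^2+\|y-z\|^2}{2}-\delta(\|x-y\|).
\]
Granting this, since $\|x-z\|,\|y-z\|\leq d$ and $\|x-y\|\geq d/2$, and using that $\delta$ is nondecreasing, I obtain $\|m-z\|^2\leq d^2-\delta(d/2)$ for all $z\in D$, whence
\[
\sup_{z\in D}\|m-z\|\leq\sqrt{d^2-\delta(d/2)}<d=\diam(D),
\]
because $\delta(d/2)>0$. Thus $m$ is a non-diametral point of $D$, and as $D$ was an arbitrary nontrivial bounded closed convex subset of $C$, this shows that $C$ has normal structure.

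The whole difficulty is concentrated in the displayed estimate, and this is the step I expect to be the main obstacle. Formally it is merely the uniform convexity of $\|\cdot\|^2$ applied to the two vectors $u=x-z$ and $v=y-z$, whose midpoint is $m-z$ and whose difference satisfies $u-v=x-y$, so that the inequality reads $\|\tfrac{u+v}{2}\|^2\leq\frac{\|u\|^2+\|v\|^2}{2}-\delta(\|u-v\|)$. The subtlety is that the hypothesis supplies this inequality only for pairs lying in $C$ and measured from the origin, whereas $u=x-z$ and $v=y-z$ need not belong to $C$; what I really need is a translation-invariant, center-free form of the uniform convexity of $\|\cdot\|^2$ on $C$. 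My plan to secure it is to verify that the defining modulus is insensitive to the choice of center: given $x,y,z\in C$, one interpolates by $x_s=(1-s)z+sx$ and $y_s=(1-s)z+sy$, which remain in $C$ for $s\in[0,1]$ and have $\|x_s-y_s\|=s\|x-y\|$, applies the hypothesis to the pair $(x_s,y_s)\in C$, and transfers the estimate to $(x-z,y-z)$; or, more robustly, one passes to the modulus of convexity of the underlying uniformly convex norm, which is genuinely translation invariant and delivers the center-free inequality at once. Establishing this center-free estimate is the heart of the matter, and once it is in place the midpoint computation above completes the proof.
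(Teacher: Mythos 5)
Your strategy is the same as the paper's: take the midpoint $m=\frac{x+y}{2}$ of two distinct points of $D$ and use the uniform convexity of $\|\cdot\|^2$, applied to the vectors $x-z$ and $y-z$, to bound $\sup_{z\in D}\|m-z\|$ strictly below $\diam(D)$. You have also correctly isolated the one genuine difficulty: the hypothesis yields
\[
\left\|\tfrac{u+v}{2}\right\|^2\leq\frac{\|u\|^2+\|v\|^2}{2}-\delta_{\|\cdot\|^2}(\|u-v\|)
\]
only for $u,v\in C$, whereas you need it for $u=x-z$, $v=y-z$, which in general do not lie in $C$. Unfortunately neither of your proposed repairs closes this gap. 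The interpolation $x_s=z+s(x-z)$, $y_s=z+s(y-z)$ does keep the points in $C$, but the inequality it produces controls $\bigl\|z+s(\tfrac{x+y}{2}-z)\bigr\|^2$ in terms of $\|z+s(x-z)\|^2$ and $\|z+s(y-z)\|^2$; outside Hilbert space $\|w-z\|^2$ is not an affine perturbation of $\|w\|^2$, so the $z$-dependence does not cancel and there is no transfer to the centered quantities $\|x-z\|^2$, $\|y-z\|^2$, $\|m-z\|^2$. The second repair is simply unavailable: the hypothesis is uniform convexity of $\|\cdot\|^2$ \emph{on the bounded convex set} $C$, which does not supply any globally uniformly convex norm on $X$ (in the intended application, the norm from Theorem \ref{unif_convex_renorm} is uniformly convex only on sets strongly generated by $K$, and $X$ need not be superreflexive). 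So the key estimate on which your whole argument rests is asserted rather than proved.

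For fairness, the paper's own proof makes exactly the move you were worried about: it applies $\delta_{\|\cdot\|^2}$ to $\frac{x-x_n}{2}$ and $\frac{y-x_n}{2}$, which likewise need not belong to $C$ (and it also contains a harmless misplaced factor of $2$ and evaluates the modulus at $\|x-y\|$ instead of $\tfrac12\|x-y\|$). The clean resolution is to strengthen the hypothesis to uniform convexity of $\|\cdot\|^2$ on a convex set containing $\tfrac12(C-C)$; this costs nothing in the application, since $H\subset nK+\varepsilon B_X$ with $K$ absolutely convex implies $\tfrac12(H-H)\subset nK+\varepsilon B_X$, so the renormed $|\cdot|^2$ is uniformly convex on $\tfrac12(C-C)$ as well. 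With that hypothesis your midpoint computation goes through verbatim; without it, the center-free estimate remains an unproved step, and the two routes you sketch toward it would not succeed.
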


\begin{proof}
Let $D$ be a bounded closed convex subset of $C$ and let $d=\text{diam}(D)$. Fix $x,y\in D$ two distinct point and let us show that $\frac{x+y}{2}$ is not a diametral point of $D$. Suppose that it is diametral. Then for all $n\in\mathbb N$ there exists $x_n\in D$ such that $\left\|\frac{x+y}{2}-x_n\right\|^2>d^2-\frac{1}{n}.$ It follows that:
\begin{align*}
d^2-\frac{1}{n}&<\left\|\frac{x+y}{2}-x_n\right\|^2\\
&=2\left\|\frac{1}{2}\left(\frac{x-x_n}{2}+\frac{y-x_n}{2}\right)\right\|^2\\
&\leq\left\|\frac{x-x_n}{2}\right\|^2+\left\|\frac{y-x_n}{2}\right\|^2-\delta_{\|.\|^2}(\|x-y\|)\\
&\leq d^2-\delta_{\|.\|^2}(\|x-y\|)
\end{align*}
and we obtain a contradiction by letting $n\to\infty$.
\end{proof}

\begin{thm}\label{SWC_FPP}
Let $C$ be a bounded closed convex subset of a Banach space $X$. The following assertions are equivalent:
\begin{enumerate}
    \item[(i)] $C$ is SWC;
    \item[(ii)] $C$ has the super-(FPP for affine isometries);
    \item[(iii)] $C$ has the super-(FPP for continuous affine mappings);
    \item[(iv)] there exists an equivalent norm on $X$ such that $C$ the super-(FPP for non-expansive mappings).
\end{enumerate}
\end{thm}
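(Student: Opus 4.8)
The plan is to prove the cycle $(i)\Rightarrow(iii)\Rightarrow(ii)\Rightarrow(i)$, which gives the equivalence of the first three assertions, and then to close the loop for $(iv)$ by establishing $(i)\Rightarrow(iv)$ and $(iv)\Rightarrow(i)$ separately. The implication $(iii)\Rightarrow(ii)$ is immediate: an affine isometry of a set into itself is in particular a continuous affine self-map, and this remark applies verbatim to every convex set f.r. in $C$, so the super-(FPP for continuous affine mappings) entails the super-(FPP for affine isometries).

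For $(i)\Rightarrow(iii)$, let $D$ be a convex set f.r. in $C$; replacing $D$ by $\overline{D}$ (still f.r. in $C$ by the proposition on closures) we may assume $D$ is closed, hence weakly closed by convexity, and $D$ is bounded since it is f.r. in the bounded set $C$. As $C$ is SWC, $D$ is relatively weakly compact, so $D$ is weakly compact. Let $T\colon D\to D$ be continuous affine; by the lemma preceding Lemma \ref{weak_continuous}, $T$ is weakly continuous. Fix $x\in D$ and set $S_n(x)=\frac1n\sum_{k=0}^{n-1}T^k(x)\in D$. By weak compactness a subnet $(S_{n_\alpha}(x))_\alpha$ converges weakly to some $y\in D$; since $D$ is bounded, $T(S_{n_\alpha}(x))=S_{n_\alpha}(x)+\frac1{n_\alpha}\bigl(T^{n_\alpha}(x)-x\bigr)\xrightarrow{w}y$, while weak continuity gives $T(S_{n_\alpha}(x))\xrightarrow{w}T(y)$, whence $T(y)=y$. (This is exactly the Markov--Kakutani argument for the weakly compact convex set $D$.) As $D$ was arbitrary, $C$ has the super-(FPP for continuous affine mappings).

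The negative implications $(ii)\Rightarrow(i)$ and $(iv)\Rightarrow(i)$ both rely on the shift of a spreading model, argued by contraposition. If $C$ is not SWC, then by Theorem \ref{thm.fr} there is a free ultrafilter $\mathcal U$ with $C^{\mathcal U}$ not relatively weakly compact, so $C^{\mathcal U}$ contains a sequence with no weakly convergent subsequence; passing to a good subsequence $(x_n)_n$ we form the spreading model $Z$ with fundamental sequence $(e_n)_n$ and shift $T$. Then $T$ restricts to a linear (hence affine) isometry of $\cconv\{e_n\}_n$ into itself, and $\cconv\{e_n\}_n$ is f.r. in $\co\{x_n\}_n\subset C^{\mathcal U}$, itself f.r. in $C$, by Corollary \ref{fr_sm} and Proposition \ref{fr1}. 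By Proposition \ref{shift_fixedpoint}, $T|_{\cconv\{e_n\}_n}$ has a fixed point if and only if $(e_n)_n$ converges weakly, and this fails in both cases of the dichotomy used in Theorem \ref{super-ergodic}: if $(e_n)_n$ is equivalent to the $\ell_1$-basis it cannot converge weakly (the $\ell_1$-basis does not), and if it is not $\ell_1$-equivalent then its weak convergence would force that of $(x_n)_n$ (Theorem~3 p.25 in \cite{Beauzamy}), a contradiction. Thus $T|_{\cconv\{e_n\}_n}$ is a fixed-point-free affine isometry, giving $(ii)\Rightarrow(i)$. For $(iv)\Rightarrow(i)$ note that the criterion in Theorem \ref{thm.fr} is invariant under equivalent renormings, so ``$C$ is not SWC'' holds for every equivalent norm; running the construction with the norm furnished by $(iv)$ produces a fixed-point-free isometry, which is non-expansive for that norm, contradicting $(iv)$.

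Finally, for $(i)\Rightarrow(iv)$ we invoke the renorming Theorem \ref{unif_convex_renorm}. Since $C$ is SWC so is its closed absolutely convex hull $K$, and $C\subset K$ is strongly generated by $K$; hence there is an equivalent norm $|\cdot|$ on $X$ whose square is uniformly convex on $C$. Given any closed convex $D$ f.r. in $C$, Lemma \ref{weak_continuous} shows $|\cdot|^2$ is uniformly convex on $D$, so by Lemma \ref{fr_unifconv} the set $D$ has normal structure; moreover $D$ is weakly compact since $C$ is SWC (for the norm $|\cdot|$ as well). Kirk's theorem \cite{Kirk} then yields that $D$ has the FPP for $|\cdot|$-non-expansive mappings, which is precisely the super-(FPP for non-expansive mappings) for $C$. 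I expect the genuine obstacle to be this last implication, namely verifying that the closed absolutely convex hull of a SWC set is SWC and strongly generates $C$ so that Theorem \ref{unif_convex_renorm} applies, and then combining the transferred uniform convexity with Kirk's fixed point theorem uniformly over all sets f.r. in $C$.
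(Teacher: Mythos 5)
Your argument is correct and follows the same overall architecture as the paper: the cycle $(i)\Rightarrow(iii)\Rightarrow(ii)\Rightarrow(i)$ with the reverse implication obtained from the shift of a spreading model built on a non-weakly-compact sequence in $C^{\mathcal U}$ (exactly the dichotomy of Theorem \ref{super-ergodic} combined with Proposition \ref{shift_fixedpoint}), and $(i)\Leftrightarrow(iv)$ via the renorming of Theorem \ref{unif_convex_renorm}, Lemmas \ref{weak_continuous} and \ref{fr_unifconv}, and Kirk's theorem. The one genuine divergence is in $(i)\Rightarrow(iii)$: the paper quotes the Schauder--Tychonoff theorem for a weakly continuous affine self-map of a weakly compact convex set, whereas you run a Markov--Kakutani/Ces\`aro-mean argument, extracting a weak cluster point $y$ of $\frac1n\sum_{k=0}^{n-1}T^k(x)$ and using $T(S_n(x))-S_n(x)=\frac1n(T^n(x)-x)\to 0$ to get $T(y)=y$; this is more elementary and is in fact the same mechanism as the paper's own Theorem \ref{ergo}, at the cost of being restricted to affine maps (which suffices here). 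Two small remarks: your reduction of $D$ to $\overline D$ in $(i)\Rightarrow(iii)$ is unnecessary and slightly delicate (a fixed point of the extension could a priori fall in $\overline D\setminus D$); the paper's definition of the FPP only concerns closed bounded convex sets, so one should simply read the super-FPP as quantifying over closed bounded convex sets f.r.\ in $C$. Finally, the fact that the closed absolutely convex hull of a SWC set is SWC, which you rightly flag as the hypothesis needed to invoke Theorem \ref{unif_convex_renorm}, is indeed used silently by the paper as well; it is a known (nontrivial) result from the SWC literature, so your proof is on the same footing as the paper's there.
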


\begin{proof}
Any continuous affine self-mapping of a closed convex
set is weakly continuous by Lemma \ref{weak_continuous}. It follows that $(i)\implies (iii)$ by Schauder-Tychonoff theorem. The implication $(iii)\implies (ii)$ is obvious.

$(ii)\implies (i)$ Just follow the proof of $(ii)\implies (i)$ in Theorem \ref{super-ergodic}.

%Suppose that $C$ is not SWC and let $\mathcal U$ be any free ultrafilter on $\mathbb N$. Then $C_\mathcal U$ is not weakly compact, i.e. there exists a sequence $(x_n)_n\in C_\mathcal U$ without any convergent subsequence. Taking subsequence if necessary, we can suppose that $(x_n)_n$ is a good sequence. Let $Z$ be the spreading model built on $(x_n)_n$ with fundamental sequence $(e_n)_n$. The isometry $T_{/\cconv\{e_n\}_n}$ defined in the previous part does not have any fixed point. Since $\cconv\{e_n\}_n$ is f.r. in $\co\{x_n\}_n\subset C_\mathcal U$ and $C_\mathcal U$ is f.r. in $C$, we deduce that $\cconv\{e_n\}_n$ is f.r. in $C$. We conclude that $C$ does not have the super-(FPP for affine isometries). 

$(i)\implies (iv)$ By Theorem \ref{unif_convex_renorm}, there exists an equivalent $|.|$ norm on $X$ such that $|.|^2$ is uniformly convex on $C$. Let us show that $(C,|.|)$ has the super-(FPP for non-expansive mappings). Let $D$ be any convex set f.r. in $(C,|.|)$. By the two previous lemmas,  $D$ has normal structure. By Kirk's theorem (see \cite{Kirk}), it follows that $D$ has the FPP for non-expansive mapping.

$(iv)\implies (i)$ Under this new norm, $C$ has the super-(FPP for affine isometries) and we deduce that $C$ is SWC thanks to the implication $(ii)\implies (i)$.
\end{proof}

%\begin{rema}\label{rema}
%The implication $(ii)\implies (i)$ shows that $C$ is SWC if and only if for every convex subset $D$ of a Banach space $Y$ which is f.r. in $C$, any affine isometry $T:Y\to Y$ such that $T(D)\subset D$ has a fixed point (which is the result we mentionned in \cite{SWC}).
%\end{rema}

\begin{rema}
In general, it is not true that a convex SWC set has the super-(FPP for isometries). In fact, Alspach constructed a weakly compact set $K$ (and then SWC, see the next part) of $L_1[0,1]$ and an isometry $T:K\to K$ without any fixed point (see \cite{Alspach}).
\end{rema}

\begin{rema}
The implication $(i)\implies (ii)$ can also be proved directly using the existence of lower semi-continuous uniformly convex functions on $C$. The ideas of this construction can be found in \cite{unifconvex} and we refer the reader to this document for the definitions of the different notions used in the following argument. Let $T:C\to C$ be any isometric affine mapping. For $\varepsilon>0$, we define $f_\varepsilon(x)$ as the height of the tallest $\varepsilon$-separated dyadic tree with root $x$. Note that gluing trees, it is easy to see that $f_\varepsilon$ fulfills that $$f_\varepsilon\left(\frac{x+y}{2}\right)\geq\min\{f_\varepsilon(x),f_\varepsilon(y)\}+1$$ whenever $\|x-y\|\geq\varepsilon$, i.e. $f_\varepsilon$ is a $\varepsilon$-quasi concave function. Since a $\varepsilon$-separated dyadic tree with root $x$ gives a $\varepsilon$-separated dyadic tree with root $T(x)$ through $T$, we have that $f_\varepsilon\circ T\geq f_\varepsilon$. We easily deduce that the function $h_\varepsilon:=3^{-f_\varepsilon}$ is $\varepsilon$-uniformly convex and verifies that $h_\varepsilon\circ T\leq h_\varepsilon$. We deduce that the closed convex envelope $g_\varepsilon=\cconv(h_\varepsilon)$ of $h_\varepsilon$ is $2\varepsilon$-uniformly convex convex and lower semi-continuous. Moreover, note that $g_\varepsilon\circ T\leq g_\varepsilon$. In fact, we have that $g_\varepsilon\circ T=\cconv(h_\varepsilon)\circ T\leq h_\varepsilon\circ T\leq h_\varepsilon$ where $g_\varepsilon\circ T$ is convex lower semi-continuous and then $g_\varepsilon\circ T\leq \cconv(h_\varepsilon)=g_\varepsilon$. The closed envelope $g$ of the function $\sum_{n\geq 1}\frac{1}{2^n\|g_{1/n}\|_\infty}g_{1/n}$ is uniformly convex, lower semi-continuous and verifies that $g\circ T\leq g$. The function $g$ is convex lower semi-continuous (thus convex $w$-lower semi-continuous) on a weakly compact set and then reaches his minimum on $C$ at some point $x\in C$. Since $g$ is uniformly convex, this miminum is unique. Moreover, we have that $g(T(x))\leq g(x)$. It follows that $T(x)=x$.
\end{rema}

\section{Application to S$^2$WCG Banach spaces}

The following definition was in \cite{S2WCG}:

\begin{defi}
A Banach space $X$ is said to be \textit{strongly super weakly compactly generated} (in short, S$^{\,2}$WCG) if there is a SWC set $K \subset X$ that strongly generates any weakly compact set $H \subset X$.
\end{defi}

Note that if $X$ is S$^{\,2}$WCG then any weakly compact subset of $X$ is SWC (see Theorem 4.1 in \cite{Cheng}). A fundamental example of a S$^{\,2}$WCG space is $L^1(\Omega,\mathcal A,\mu,X)$ where $(\Omega,\mathcal A,\mu)$ is a finite measure space and $X$ is a superreflexive Banach space (see \cite{S2WCG} for the proof).\\

%The following result in an immediate consequence of Theorem \ref{unif_convex_renorm}:

%\begin{thm}\label{unif_convex_renorm}
%Let $X$ be a S$^2$WCG Banach space. Then there is an equivalent norm on $X$ such that its restriction to any reflexive subspace uniformly convex. In particular, every reflexive subspace of $X$ is superreflexive. 
%\end{thm}

%\begin{rema}
%If $X$ is a S$^2$WCG Banach space then $X$ admits an equivalent norm such that all of its weakly compact convex subsets has the super-(FPP for non-expansive mapping). In fact, Raja proved that such a space can be renormed such that the restriction of this new norm to any weakly compact convex subset is uniformly convex. 
%\end{rema}

Theorem 4.6 of \cite{SWC} is obtained as an easy consequence of the previous results:

\begin{prop}
Let $X$ be a S$^2$WCG Banach space. Then $X$ admits an equivalent norm such that any weakly compact convex subset has the FPP for non-expansive mappings.
\end{prop}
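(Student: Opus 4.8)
The plan is to exploit the single super weakly compact generator together with the renorming of Theorem \ref{unif_convex_renorm}, and then to run the chain ``uniform convexity $\Rightarrow$ normal structure $\Rightarrow$ fixed point property'' uniformly over all weakly compact convex sets simultaneously.

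First I would arrange for the generator to be absolutely convex. By definition of S$^2$WCG there is a SWC set $K\subset X$ that strongly generates every weakly compact subset of $X$. Replacing $K$ by its closed absolutely convex hull $L=\overline{\operatorname{aco}}(K)$ keeps it super weakly compact, since the closed absolutely convex hull of a SWC set is again SWC; moreover $L$ still strongly generates every weakly compact $H$, because $H\subset nK+\varepsilon B_X$ together with $K\subset L$ forces $H\subset nL+\varepsilon B_X$. Hence we may assume that $K$ is itself an absolutely convex SWC set strongly generating every weakly compact subset of $X$.

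Next I would apply Theorem \ref{unif_convex_renorm} to this $K$, obtaining an equivalent norm $|.|$ on $X$ such that $|.|^2$ is uniformly convex on every convex set strongly generated by $K$. This is the norm claimed in the statement; its decisive feature is that it is built from $K$ alone, hence it serves all weakly compact convex sets at once. Indeed, let $H$ be any weakly compact convex subset of $X$. By the S$^2$WCG property $K$ strongly generates $H$, so $|.|^2$ is uniformly convex on $H$. Lemma \ref{fr_unifconv} then shows that $H$ has normal structure for $|.|$, and Kirk's theorem (\cite{Kirk}) yields that $H$ has the FPP for non-expansive mappings with respect to $|.|$.

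The only genuine obstacle is the absolute-convexity reduction in the first step, which rests on the stability of super weak compactness under passing to the closed absolutely convex hull. This is a standard property of SWC sets, analogous to the classical fact that the closed convex hull of a weakly compact set is weakly compact, and once it is granted the remainder of the argument is a direct concatenation of Theorem \ref{unif_convex_renorm}, Lemma \ref{fr_unifconv}, and Kirk's fixed point theorem.
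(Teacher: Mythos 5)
Your proposal is correct and follows essentially the same route as the paper: apply Theorem \ref{unif_convex_renorm} to an absolutely convex SWC strong generator, then conclude via uniform convexity of the square of the norm on each weakly compact convex set, Lemma \ref{fr_unifconv}, and Kirk's theorem. The only difference is that you explicitly justify passing to the closed absolutely convex hull of the generator (a standard stability property of SWC sets, established in the literature the paper cites), whereas the paper simply takes the generator to be absolutely convex from the outset.
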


\begin{proof}
Let $K$ be a SWC absolutely convex set that strongly generates $X$. Consider that $X$ is endowed with the norm given by Theorem \ref{unif_convex_renorm}. It follows that the square of the norm is uniformly convex on any weakly compact subset of $X$ and then any weakly compact subset has normal structure by Lemma \ref{fr_unifconv}. The conclusion is obtained thanks to Kirk's theorem.
\end{proof}

Combining the results of the previous parts, we obtain:

\begin{prop}
Let $C$ be a closed convex subset of a S$^2$WCG Banach space $X$. The following assertions are equivalent:
\begin{enumerate}
    \item[(i)] $C$ is weakly compact;
    \item[(ii)] $C$ is SWC;
    \item[(iii)] $C$ is superergodic;
    \item[(iv)] any closed convex subset of $C$ is ergodic;
    \item[(v)] any closed convex subset of $C$ has the FPP for continuous affine mappings.
\end{enumerate}
\end{prop}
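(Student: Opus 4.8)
The plan is to route all five conditions through the three main theorems already proved, invoking the S$^{\,2}$WCG hypothesis at exactly one point. First I would record a general fact, valid in any Banach space: a bounded weakly closed SWC set is automatically weakly compact. Indeed, $C$ is trivially finitely represented in itself (in the definition of finite representability take $B_0=A_0$ and $T$ the identity on each finite linearly independent subset), so by the definition of relative super weak compactness $C$ is relatively weakly compact, hence weakly compact as it is weakly closed. This yields (ii)$\Rightarrow$(i) with no hypothesis. The converse (i)$\Rightarrow$(ii) is where S$^{\,2}$WCG enters: since every weakly compact subset of an S$^{\,2}$WCG space is SWC (Theorem 4.1 in \cite{Cheng}, as recalled above), the weakly compact set $C$ is SWC. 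Thus (i)$\Leftrightarrow$(ii).

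Next I would simply quote the machinery. The equivalence (ii)$\Leftrightarrow$(iii) is precisely Theorem \ref{super-ergodic}, and the equivalence (i)$\Leftrightarrow$(iv) is precisely Theorem \ref{weakly_compact}; neither of these requires the S$^{\,2}$WCG assumption. At this stage (i), (ii), (iii), (iv) are mutually equivalent, and it remains only to insert (v) into the cycle.

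For (ii)$\Rightarrow$(v) I would invoke Theorem \ref{SWC_FPP}: being SWC, $C$ has the super-(FPP for continuous affine mappings). Any closed convex subset $D\subseteq C$ is finitely represented in $C$ (again via the identity), and $D$ is bounded because $C$ is; hence the super-FPP specializes to the statement that every closed convex subset of $C$ has the FPP for continuous affine mappings, which is exactly (v). For the return implication (v)$\Rightarrow$(i) I would argue by contraposition, reusing the construction from the proof of Theorem \ref{weakly_compact}: if $C$ is not weakly compact, that proof produces a closed convex subset $A\subseteq C$ together with a continuous affine self-map $T=\Phi^{-1}S\Phi\colon A\to A$ admitting no fixed point. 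This single map already contradicts (v), which closes the cycle and finishes the proof.

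The main point to watch is this last step: the object supplied by Theorem \ref{weakly_compact} must be read not merely as a non-ergodic map but as a genuine fixed-point-free continuous affine self-map of a closed convex subset, which is exactly what (v) forbids; so no new construction is needed, only the observation that the same witness refutes both ergodicity and the fixed point property. A secondary point worth stating explicitly is that all five conditions force $C$ to be bounded (SWC is defined only for bounded sets, and weak compactness implies boundedness), so there is no loss in carrying out the whole argument for bounded $C$.
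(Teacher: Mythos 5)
Your proof is correct, and it uses the same ingredients as the paper (Theorems \ref{weakly_compact}, \ref{super-ergodic}, \ref{SWC_FPP}, the Cheng et al.\ result that weakly compact subsets of S$^2$WCG spaces are SWC, and the Benavides--Pineda--Prus construction), but organized differently. The paper runs a single cycle $(i)\Rightarrow(ii)\Rightarrow(iii)\Rightarrow(iv)\Rightarrow(v)\Rightarrow(i)$, obtaining $(iv)\Rightarrow(v)$ from Theorem \ref{ergo} and $(v)\Rightarrow(i)$ by citing Theorem 3.2 of \cite{Bena} directly; you instead establish $(i)\Leftrightarrow(ii)$, quote $(ii)\Leftrightarrow(iii)$ and $(i)\Leftrightarrow(iv)$ verbatim from the earlier theorems, and splice in $(v)$ via $(ii)\Rightarrow(v)$ (Theorem \ref{SWC_FPP} plus the observation that subsets are finitely represented in the ambient set) and $(v)\Rightarrow(i)$ (reusing the fixed-point-free affine map $T=\Phi^{-1}S\Phi$ from the proof of Theorem \ref{weakly_compact}, which is the same \cite{Bena} witness). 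Your route for $(v)$ is arguably the safer one: deducing the FPP for \emph{all} continuous affine mappings from ergodicity alone, as the paper's $(iv)\Rightarrow(v)$ does, requires an extra word, since Theorem \ref{ergo} only governs Cesaro equicontinuous maps, whereas Theorem \ref{SWC_FPP} handles arbitrary continuous affine self-maps via Schauder--Tychonoff. The explicit remarks that $(ii)\Rightarrow(i)$ holds with no S$^2$WCG hypothesis (a set being finitely represented in itself) and that all conditions force boundedness are correct and harmless additions.
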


\begin{proof}
$(i)\implies (ii)$ follows from the fact that $X$ is S$^2$WCG. $(ii)\implies (iii)$ follows from Theorem \ref{super-ergodic}. $(iii)\implies (iv)$ is obvious. $(iv)\implies (v)$ follows theorem \ref{ergo}. $(v)\implies (i)$ is Theorem 3.2 in \cite{Bena}.
\end{proof}

\begin{prop}
Let $Y$ be a subspace of a S$^2$WCG Banach space $X$. The following assertions are equivalent:
\begin{enumerate}
    \item[(i)] $Y$ is reflexive;
    \item[(ii)] $Y$ is superreflexive;
    \item[(iii)]$Y$ is super-ergodic;
    \item[(iv)] $Y$ is ergodic;
    \item[(iv)] $Y$ has the FPP for continuous affine mappings.
\end{enumerate}
\end{prop}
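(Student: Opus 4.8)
The plan is to carry every assertion over to the closed bounded convex set $B_Y$, viewed as a closed convex subset of $X$, and then to feed it into the results already established. First I would record the two ambient dictionary facts for a (closed) subspace $Y$: one has $Y$ reflexive $\iff B_Y$ weakly compact, and $Y$ superreflexive $\iff B_Y$ SWC. Since the weak topology of $Y$ is the restriction of that of $X$, weakly compact subsets of $Y$ are weakly compact in $X$; as $X$ is S$^2$WCG, every such set is SWC, while SWC sets are always relatively weakly compact. Applied to $B_Y$ this gives $B_Y$ weakly compact $\iff B_Y$ SWC, whence $(i)\iff(ii)$. The equivalence $(ii)\iff(iii)$ is exactly the theorem asserting that super-reflexivity coincides with super-ergodicity, and $(iii)\implies(iv)$ is immediate since $Y$ is finitely represented in itself. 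For the fixed point clause, $(i)\implies(v)$ follows from the Schauder-Tychonoff theorem applied to the weakly continuous self-maps supplied by the lemma preceding Theorem \ref{SWC_FPP}, while $(v)\implies(i)$ is Theorem~3.2 in \cite{Bena}, which manufactures a fixed-point-free continuous affine self-map on any non-weakly-compact closed bounded convex set.

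It remains to close the cycle through $(iv)\implies(i)$, which is the heart of the matter. By Proposition \ref{ergo_Banach}, assertion $(iv)$ is equivalent to $B_Y$ being ergodic \emph{as a set}; the difficulty is that Theorem \ref{weakly_compact} characterizes weak compactness of $B_Y$ through ergodicity of \emph{all} its closed convex subsets, which is a priori stronger than ergodicity of $B_Y$ alone, so Theorem \ref{weakly_compact} cannot be invoked verbatim. I would therefore argue by contraposition at the level of the Banach space: assuming $Y$ is not reflexive, I would exhibit a Cesaro-bounded operator on $Y$ that is not mean ergodic, contradicting $(iv)$. Starting from a bounded sequence in $Y$ with no weakly convergent subsequence, a Bessaga--Pe\l czy\'nski selection produces a basic subsequence whose closed linear span $W$ is a non-reflexive subspace of $Y$ carrying a Schauder basis. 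The classical ergodic characterization of reflexivity for spaces with a basis (see \cite{ergodic_def}) then yields a power-bounded, hence Cesaro-bounded, operator $T_W\colon W\to W$ that is not mean ergodic.

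The main obstacle is precisely the passage from $W$ to $Y$: a non-mean-ergodic operator on a subspace need not extend to one on the whole space unless the subspace is complemented. The cleanest way I see to overcome this is to arrange that $W$ be complemented in $Y$; then, writing $P\colon Y\to W$ for a bounded projection and $i\colon W\to Y$ for the inclusion, the operator $T=iT_WP$ satisfies $T^k|_W=T_W^k$ for all $k\geq 1$, so $\frac{1}{n}\sum_{k=0}^{n-1}T^k$ is uniformly bounded while its averages fail to converge on $W$; hence $T$ is not mean ergodic and $Y$ is not ergodic. Securing such a complemented non-reflexive $W$ is where the S$^2$WCG structure of the ambient space should be exploited, and this is the step I expect to demand the most care; alternatively one may appeal to a basis-free form of the ergodic characterization of reflexivity directly for $Y$. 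Once $(iv)\implies(i)$ is secured, the chain $(i)\iff(ii)\iff(iii)\iff(iv)$ together with $(i)\iff(v)$ closes all five equivalences.
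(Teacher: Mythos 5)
Your reductions for four of the five equivalences are correct and are essentially what the paper intends: $(i)\Leftrightarrow(ii)$ via the fact that in a S$^2$WCG space weakly compact sets are SWC (applied to $B_Y$), $(ii)\Leftrightarrow(iii)$ by the theorem identifying super-reflexivity with super-ergodicity, $(iii)\Rightarrow(iv)$ trivially, and $(i)\Leftrightarrow(v)$ by Schauder--Tychonoff in one direction and Theorem 3.2 of \cite{Bena} in the other. (Note only that the fixed-point-free continuous affine map produced there lives on a closed convex subset of the non-weakly-compact set rather than on the set itself; this is harmless because the FPP of $Y$ quantifies over all closed bounded convex subsets.)

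The gap is exactly where you say it is: $(iv)\Rightarrow(i)$. Your proposed route --- extract a non-reflexive basic sequence, let $W$ be its closed span, invoke the ergodic characterization of reflexivity on $W$, and transport the non-mean-ergodic operator to $Y$ through a projection --- founders at the complementation step, and you do not close it. In general a non-reflexive subspace need not contain a \emph{complemented} non-reflexive subspace with a basis, and you give no argument that the S$^2$WCG hypothesis forces this (it does for subspaces of $L^1$ by Kadec--Pe\l czy\'nski--Rosenthal, but not obviously in general); without complementation the operator on $W$ does not yield a Cesaro-bounded operator on $Y$ with divergent averages. So as written the cycle of implications does not close. For comparison, the paper's own proof is the single sentence that the statement ``follows directly from the previous results,'' i.e., from the preceding proposition applied to $C=B_Y$; under that reading the hypothesis actually used in $(iv)$ is ``every closed bounded convex subset of $B_Y$ is ergodic'' (item (iv) of that proposition), for which $(iv)\Rightarrow(i)$ is immediate from Theorem \ref{weakly_compact}. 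If instead $(iv)$ is read literally through Proposition \ref{ergo_Banach} as ``every Cesaro-bounded operator on $Y$ is mean ergodic'' --- which is how you read it --- then $(iv)\Rightarrow(i)$ is essentially the ergodic characterization of reflexivity for spaces without a basis, which \cite{ergodic_def} establishes only in the presence of a basis. You were right to flag the difficulty; you have not resolved it, and the resolution (or the weaker reading of $(iv)$) is what the proof needs.
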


\begin{proof}
It follows directly from the previous results.
\end{proof}

\begin{rema}
In the case of $L^1(\Omega,\mathcal A,\mu,\mathbb R)$, the previous corollary can be considerably improved. In that case, $Y$ is reflexive if and only if $Y$ has the FPP for non-expansive mappings. One implication is due to Maurey (Theorem 1 in \cite{Maurey_L1}) and the other one is due to Dowling and Lennard (Theorem 1.4 in \cite{Dowling}).
\end{rema}

\section{A remark on the M-FPP}

\begin{defi}\label{M-property}
Let $(\mathcal P)$ be a property of Banach spaces. We say that a Banach space $X$ has the property M-$(\mathcal P)$ if any spreading model of $X$ has $(\mathcal P)$.
\end{defi}

It is worth noting that it does not imply that $X$ has $(\mathcal P)$ in general. The notion of M-property has been introduced by Beauzamy in \cite{Beauzamy} (see Chapter 5). In this book, the author claims that there does not exist any characterization of the M-reflexivity. As far as we know, this question is still opened.\\

Remember that a Banach space $X$ has the \textit{Banach-Saks property} (resp. \textit{alternate Banach-Saks property}) if any bounded sequence $(x_n)_n\subset X$ admits a subsequence $(x_{\phi(n)})_n$ such that $\left(\sum_{k=1}^nx_{\phi(k)}\right)_n$ (resp. $\left(\sum_{k=1}^n(-1)^kx_{\phi(k)}\right)_n$) converges. For more informations about these properties and its links with spreading models, we refer the reader to \cite{Beauzamy}. \\

We start with the following lemma:

\begin{lema}
The M-(FPP for affine isometries) implies the alternate Banach-Saks property.
\end{lema}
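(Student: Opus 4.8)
The plan is to read the alternate Banach--Saks property off a single spreading model per sequence, using the fixed point hypothesis to force weak convergence of fundamental sequences and then the mean ergodic theorem to annihilate alternating averages.

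First I would reduce to the study of one spreading model. Let $(x_n)_n$ be an arbitrary bounded sequence in $X$ and extract a good subsequence (which exists by Ramsey's theorem, as recalled above). Either it converges in norm, in which case the pairs $x_{2j}-x_{2j-1}$ tend to $0$ and the alternating Cesàro means converge trivially, or it generates a genuine spreading model $Z$ with fundamental sequence $(e_n)_n$. In the latter case the hypothesis applies to $Z$: being a spreading model of $X$, it has the FPP for affine isometries, so the closed bounded convex set $\cconv\{e_n\}_n$ together with the shift $T$ (a linear isometry of $Z$ with $T(\cconv\{e_n\}_n)\subset\cconv\{e_n\}_n$) falls under it. Thus $T|_{\cconv\{e_n\}_n}$ has a fixed point, and Proposition \ref{shift_fixedpoint} forces $(e_n)_n$ to be weakly convergent. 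Setting $d_j:=e_{2j}-e_{2j-1}$, the sequence $(d_j)_j$ is then weakly null.

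Next I would run Theorem \ref{ergo} on the difference sequence. Since $(e_n)_n$ is spreading, so is $(d_j)_j$ (it is built from disjoint consecutive pairs), hence the map $R\colon\sum_j a_jd_j\mapsto\sum_j a_jd_{j+1}$ extends to a linear isometry of $\overline{\spann}\{d_j\}_j$ carrying the bounded convex set $C:=\cconv\{d_j\}_j$ into itself; being convex combinations of isometries, the Cesàro means $\frac1n\sum_{k=0}^{n-1}R^k$ are $1$-Lipschitz, so $R$ is Cesàro equicontinuous. As $d_j\to0$ weakly we have $0\in C$ and $R(0)=0$, so condition (i) of Theorem \ref{ergo} holds with $x=d_1$ and $y=0$; the theorem then gives $\frac1n\sum_{j=1}^n d_j=\frac1n\sum_{k=0}^{n-1}R^k(d_1)\to0$ in norm. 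Equivalently $\frac1n\bigl\|\sum_{k=1}^{n}(-1)^ke_k\bigr\|\to0$: the alternating Cesàro averages of the fundamental sequence vanish.

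Finally I would transfer this back to $X$ through the quantitative finite representability (\ref{fr}). Applying (\ref{fr}) to the vectors $\sum_{k=1}^{2m}(-1)^ke_k=\sum_{j=1}^m d_j$ and interleaving a diagonal choice of indices with a sequence $\varepsilon_m\downarrow0$, I would produce a subsequence $(x_{\phi(k)})_k$ along which $\frac1m\bigl\|\sum_{j=1}^m\bigl(x_{\phi(2j)}-x_{\phi(2j-1)}\bigr)\bigr\|\to0$; since pairing cancels the weak limit of $(e_n)_n$, the alternating Cesàro means $\frac1n\sum_{k=1}^n(-1)^kx_{\phi(k)}$ then converge (in fact to $0$). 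As $(x_n)_n$ was arbitrary, $X$ has the alternate Banach--Saks property. The hard part is precisely this last transfer: (\ref{fr}) controls blocks of a fixed length only past a threshold depending on that length, so one cannot use it uniformly and must weave the diagonal argument with $\varepsilon_m\downarrow0$ to obtain a single subsequence along which the averages converge, while the pairing is what guarantees the limit is genuinely $0$ rather than a residual center of mass. Steps one and two, by contrast, are immediate once Proposition \ref{shift_fixedpoint} and Theorem \ref{ergo} are in hand. Should one prefer to avoid the diagonalization, the same conclusion follows by quoting the spreading-model characterization of the alternate Banach--Saks property from \cite{Beauzamy}, namely that it holds if and only if $\frac1n\|\sum_{k=1}^n(-1)^ke_k\|\to0$ for every spreading model of $X$.
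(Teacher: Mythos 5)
Your argument is correct in substance but runs in the opposite direction to the paper's. The paper proves the contrapositive in four lines: if $X$ fails the alternate Banach--Saks property, Beauzamy's theorem (Theorem 5, p.~47 of \cite{Beauzamy}) produces a spreading model isomorphic to $\ell_1$, whose fundamental sequence is then equivalent to the $\ell_1$ basis, hence not weakly convergent, so by Proposition \ref{shift_fixedpoint} the shift has no fixed point on $\cconv\{e_n\}_n$ and the M-FPP fails. You instead argue directly: for an arbitrary bounded sequence you build a spreading model, use the FPP hypothesis plus Proposition \ref{shift_fixedpoint} to get weak convergence of $(e_n)_n$, and then --- this is your genuinely new ingredient --- apply Theorem \ref{ergo} to the shift acting on $\cconv\{d_j\}_j$, where $d_j=e_{2j}-e_{2j-1}$, to obtain norm convergence $\frac1m\sum_{j\le m}d_j\to0$. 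That step is a nice self-contained replacement for the lemma in \cite{Beauzamy} asserting that a spreading sequence not equivalent to the $\ell_1$ basis has vanishing alternating Cesàro means, and it is correct as stated ($0\in\cconv\{d_j\}_j$ because that set is weakly closed and $(d_j)_j$ is weakly null, and the Cesàro means of an isometry are $1$-Lipschitz, hence equicontinuous). What the paper's route buys is that the entire transfer from the spreading model back to $X$ is outsourced to Beauzamy's theorem; what your route buys is an explicit quantitative statement inside $Z$, at the price of having to redo that transfer yourself.

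That transfer is where your sketch is thin. ``Interleaving a diagonal choice of indices with $\varepsilon_m\downarrow 0$'' does not by itself work: (\ref{fr}) controls a block of length $2m$ only when its \emph{first} index exceeds a threshold $p(2m,\varepsilon_m)$, and for a fixed subsequence the first index $\phi(1)$ never moves, so the full block $x_{\phi(1)},\dots,x_{\phi(2m)}$ is never eligible for large $m$. The missing ingredient is to discard an initial segment: choose $\phi$ so rapidly increasing that $\phi(k_0(m)+1)\ge p(2m,\tfrac12)$ for some even $k_0(m)=o(m)$ (say $k_0(m)\sim\log m$), bound the discarded head crudely by $k_0(m)\sup_n\|x_n\|$, and apply (\ref{fr}) only to the tail block, which is an alternating sum of $2m-k_0(m)$ consecutive terms. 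With that correction the direct argument closes. Your fallback of quoting the spreading-model characterization of the alternate Banach--Saks property is also legitimate, but note that once $(e_n)_n$ is weakly convergent it is in particular not equivalent to the $\ell_1$ basis, and that characterization already finishes the proof; the detour through Theorem \ref{ergo} then becomes redundant and you are essentially back to the paper's argument.
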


\begin{proof}
Let $X$ be a Banach space and suppose that $X$ does not have the alternate Banach-Saks property. By Beauzami's theorem (see Theorem 5 p.47 in \cite{Beauzamy}, it follows that $X$ has a spreading model $Z$ isomorphic to $l_1$. Then the fundamental sequence $(e_n)_n$ of $Z$ is equivalent to the canonical basis of $l_1$. In particular, $(e_n)_n$ is not weakly convergent. By the previous proposition, we deduce that $\cconv\{e_n\}_n$ and thus $Z$ do not have the FPP for affine isometries. 
\end{proof}

It is well-known that the fixed point property does not imply reflexivity. In fact, $l_1$ can be renormed to have the FPP for non-expansive mappings (see \cite{Lin}). However the M-FPP implies reflexivity. More precisely we have that:

\begin{thm}\label{M-FPP}
The M-(FPP for affine isometries) implies the Banach-Saks property.
\end{thm}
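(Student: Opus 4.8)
The plan is to prove the contrapositive, following the same scheme as the preceding lemma but replacing the alternate Banach-Saks property by the full one. So I assume that $X$ fails the Banach-Saks property and I look for a spreading model of $X$ witnessing the failure of the M-(FPP for affine isometries). The bridge is the reformulation that $X$ has the Banach-Saks property if and only if the fundamental sequence $(e_n)_n$ of every spreading model of $X$ is weakly convergent. Granting this, a space without the Banach-Saks property carries a spreading model $Z$ whose fundamental sequence $(e_n)_n$ is not weakly convergent. By Proposition \ref{shift_fixedpoint} the shift $T$ of $Z$ then has no fixed point, so the affine isometry $\restr{T}{\cconv\{e_n\}_n}$ of the closed bounded convex set $\cconv\{e_n\}_n$ into itself has no fixed point. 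Hence $Z$ fails the FPP for affine isometries and $X$ fails the M-(FPP for affine isometries), which is what we want. Note that this is exactly the mechanism of the previous lemma, applied now to a general non-weakly-convergent fundamental sequence rather than only to an $\ell_1$-like one.

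It remains to justify the reformulation, and here only one implication is needed: if every spreading model of $X$ has weakly convergent fundamental sequence, then $X$ has the Banach-Saks property. I would argue it as follows. Let $(x_n)_n$ be a bounded sequence; passing to a good subsequence via Ramsey's theorem, it generates a spreading model $Z$ with fundamental sequence $(e_n)_n$, which is weakly convergent by assumption. The shift $T$ of $Z$ is a linear isometry, so the maps $\frac{1}{n}\sum_{k=0}^{n-1}T^k$ have norm at most $1$ and $T$ is Cesaro equicontinuous; by Proposition \ref{shift_fixedpoint} it has a fixed point in $\cconv\{e_n\}_n$. Since $T^k(e_1)=e_{k+1}$, Theorem \ref{ergo} applies with $x=e_1$ and upgrades the existence of this fixed point to the norm convergence of $\frac{1}{n}\sum_{k=1}^n e_k$ in $Z$.

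The main obstacle is the final transfer (the lifting step): from the norm convergence of the Cesaro means of $(e_n)_n$ in the spreading model, deduce that $(x_n)_n$ admits a subsequence whose Cesaro means converge in norm in $X$. This is where finite representability (\ref{fr}) of finite segments of $(e_n)_n$ inside $\{x_n\}_n$ is used: since $\left(\frac{1}{n}\sum_{k=1}^n e_k\right)_n$ is norm-Cauchy, a diagonal extraction produces a subsequence $(x_{\phi(k)})_k$ whose partial Cesaro means inherit the Cauchy property up to errors that are forced to vanish, and completeness of $X$ gives convergence. I expect the bookkeeping of this diagonal argument, namely matching the growing index threshold $p$ of (\ref{fr}) against the block lengths, to be the only genuinely technical point; it is classical spreading-model machinery and can be invoked from \cite{Beauzamy}. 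As a by-product one obtains reflexivity of $X$: a good sequence whose fundamental sequence is weakly convergent is itself weakly convergent by Theorem 3 p.25 in \cite{Beauzamy} (a weakly convergent fundamental sequence is certainly not equivalent to the $\ell_1$ basis), so every bounded sequence has a weakly convergent subsequence and $X$ is reflexive by the Eberlein--\v{S}mulian theorem; this is consistent with the Banach-Saks property being strictly stronger than reflexivity, as announced in the introduction.
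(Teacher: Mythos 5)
Your first half---if some spreading model of $X$ has a non-weakly-convergent fundamental sequence, then Proposition \ref{shift_fixedpoint} makes the shift a fixed-point-free affine isometry of $\cconv\{e_n\}_n$, so $X$ fails the M-(FPP for affine isometries)---is exactly the mechanism of the paper's proof. The divergence is in the ``bridge'': the paper uses two precise citations (failure of the alternate Banach--Saks property yields an $\ell_1$ spreading model; reflexivity plus the alternate Banach--Saks property yields the Banach--Saks property, Proposition II-4-1 in \cite{Beauzamy}), whereas you consolidate these into the single claim that weak convergence of every fundamental sequence implies the Banach--Saks property, and you attempt to prove that claim yourself. The claim is true, but your proof of it has a genuine gap at the lifting step.

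The problem is that norm-Cauchyness of $\left(\frac{1}{n}\sum_{k=1}^{n}e_k\right)_n$ in $Z$ does not transfer to a subsequence of $(x_n)_n$ in the way you describe. The difference $\frac{1}{M}\sum_{k=1}^{M}x_{\phi(k)}-\frac{1}{M'}\sum_{k=1}^{M'}x_{\phi(k)}$ is a linear combination of $M'$ vectors whose smallest index is the \emph{fixed} integer $\phi(1)$, whereas (\ref{fr}) only compares combinations of $N$ vectors $x_{n_1},\dots,x_{n_N}$ with $n_1\geq p(N,\varepsilon)$, a threshold that grows with $N$. Splitting off an initial segment of length $K$ does not help: $K$ must be $o(M)$ for the discarded part to be negligible, while $M'$ may be arbitrarily larger than $M$, so one would need $\phi(K+1)\geq p(M'-K,\varepsilon)$ for unbounded $M'-K$, which (\ref{fr}) does not provide. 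This is precisely why the classical argument transfers \emph{smallness} rather than Cauchyness: one first reduces to a weakly null sequence, so that the Cesaro limit in the spreading model is $0$, then places successive blocks $I_j$ of length $N_j$ beyond the threshold $p(N_j,1/j)$ so that each block average satisfies $\frac{1}{N_j}\left\|\sum_{n\in I_j}x_n\right\|\approx\left\|\frac{1}{N_j}\sum_{k=1}^{N_j}e_k\right\|\to 0$, and concatenates. That reduction requires $(x_n)_n$ to be weakly convergent, i.e.\ it requires reflexivity \emph{before} the lifting, not as a by-product afterwards, and it also requires identifying the Cesaro limit of the fundamental sequence of the translated sequence $(x_n-x)_n$ as $0$, which you do not address. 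None of this is unfixable---reflexivity is available first by your own Eberlein--\v{S}mulian argument, and the weakly null lifting is indeed in \cite{Beauzamy}---but as written the order of the steps is wrong and the Cauchy-transfer would fail. The paper sidesteps all of this by quoting Proposition II-4-1 of \cite{Beauzamy}, which contains exactly the analytic content you are trying to reprove.
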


\begin{proof}
Let $X$ be a Banach space with the M-(FPP for affine isometries). By the previous lemma, we already know that $X$ has the alternate Banach-Saks property. Now we prove that $X$ is reflexive. By contradiction, suppose that $X$ is not reflexive. There exists a bounded sequence $(x_n)_n$ without any weakly convergent subsequence. By taking subsequence if necessary, we can suppose that $(x_n)_n$ is a good sequence generating a spreding model $Z$ with fundamental sequence $(e_n)_n$. Since $X$ has the alternate Banach-Saks property, $(e_n)_n$ is not equivalent to the canonical basis of $l_1$. Since $(x_n)_n$ is not weakly convergent, it follows that $(e_n)_n$ is not weakly convergent (see Theorem 3 p.25 in \cite{Beauzamy}). By the previous proposition, we conclude that $Z$ can not have the FPP for affine isometries, which is a contradiction. It follows that $X$ is reflexive and has the alternate Banach-Saks property and then has the Banach-Saks property (see Proposition II-4-1 in \cite{Beauzamy}).
\end{proof}

Before the next theorem, we recall some well-known facts. First the Banach-Saks property implies reflexivity (see \cite{BS_reflexive} for a proof or note that $\ell_1$ does not have the Banach-Saks property and apply Rosenthal's $\ell_1$-theorem). Moreover, any superreflexive Banach space admits an uniformly convex norm by Enflo's theorem. Then it follows that superreflexive Banach spaces have the Banach-Saks property by Kakutani's theorem (see \cite{Kakutani}). In particular, superreflexivity is equivalent to the super-Banach-Saks property.

Maurey proved that any isometry $T:C\to C$ on a closed convex subset $C$ of a superreflexive Banach space has a fixed point (see for example Theorem F p.112 in \cite{Maurey}). Any continuous affine mapping $T:C\to C$ also enjoys this property:

\begin{thm}
Let $X$ be a Banach space. The following assertions are equivalent:
\begin{enumerate}
    \item[(i)] $X$ is superreflexive;
    \item[(ii)] $X$ has the super-(FPP for affine isometries);
    \item[(iii)] $X$ has the super-(FPP for continuous affine mappings);
    \item[(iv)] $X$ has the super-(FPP for isometries).
\end{enumerate}
\end{thm}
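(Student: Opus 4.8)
The plan is to establish the equivalence of the first three assertions through the cycle $(i)\Rightarrow(iii)\Rightarrow(ii)\Rightarrow(i)$, reading off everything from Theorem \ref{SWC_FPP} together with the fact that $X$ is superreflexive if and only if $B_X$ is SWC, and then to attach (iv) via the trivial implication $(iv)\Rightarrow(ii)$ and the one substantial new implication $(i)\Rightarrow(iv)$. Throughout I interpret ``$X$ has the super-(FPP for $\mathcal C$)'' as the assertion that every bounded closed convex subset of $X$ has the super-(FPP for $\mathcal C$), consistently with the definitions above.

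For $(i)\Rightarrow(iii)$ I would argue that if $X$ is superreflexive then $B_X$ is SWC, and since any bounded set sits inside some multiple $R\,B_X$ and a set f.r. in a subset of $R\,B_X$ is f.r. in $R\,B_X$, every bounded closed convex $C\subset X$ is SWC; Theorem \ref{SWC_FPP} then gives that $C$ has the super-(FPP for continuous affine mappings), hence so does $X$. The implications $(iii)\Rightarrow(ii)$ and $(iv)\Rightarrow(ii)$ are immediate, an affine isometry being simultaneously a continuous affine mapping and an isometry. For $(ii)\Rightarrow(i)$ the super-(FPP for affine isometries) of $X$ forces every bounded closed convex subset, in particular $B_X$, to be SWC by the implication $(ii)\Rightarrow(i)$ of Theorem \ref{SWC_FPP}, so $X$ is superreflexive.

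The heart of the matter, and the step I expect to be the main obstacle, is $(i)\Rightarrow(iv)$: Theorem \ref{SWC_FPP} is of no use here because the isometries in question need not be affine, so I would instead reduce to Maurey's fixed point theorem for isometries of closed convex subsets of superreflexive spaces. Fix a bounded closed convex $C\subset X$ and a convex $D$ which is f.r. in $C$; one may assume $D$ closed, and $D$ is automatically bounded because the estimates in the definition of f.r. send its points into the bounded set $C$ with controlled norm. The crucial step is to promote the set-finite-representability of $D$ in $C$ to genuine finite representability of the Banach space $V:=\overline{\spann}(D)$ in $X$: every finite-dimensional subspace of $V$ lies, after an arbitrarily small perturbation, inside $\spann(A_0)$ for some finite linearly independent $A_0\subset D$, and this span $(1+\varepsilon)$-embeds into $X$ by hypothesis. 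Since superreflexivity passes to every Banach space finitely representable in a superreflexive space, $V$ is superreflexive, and $D$ is a bounded closed convex subset of $V$; Maurey's theorem then produces a fixed point for each isometry $T\colon D\to D$. Hence $D$ has the FPP for isometries, $C$ has the super-(FPP for isometries), and (iv) follows. The delicate point throughout is exactly this passage from finite representability of sets to finite representability of the ambient span, which is what makes a genuinely nonaffine fixed point theorem like Maurey's applicable.
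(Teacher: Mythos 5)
Your proof is correct, but it routes the one substantive implication $(ii)\Rightarrow(i)$ differently from the paper. The paper deduces $(ii)\Rightarrow(i)$ from its Theorem \ref{M-FPP}: the super-(FPP for affine isometries) implies the M-(FPP for affine isometries), hence the Banach--Saks property, for every space finitely representable in $X$; since superreflexivity is equivalent to the super-Banach--Saks property, $(i)$ follows. This passes through spreading models and the shift operator, and it is placed where it is precisely because it also yields the extra information that the M-(FPP for affine isometries) already forces Banach--Saks. You instead apply the localized Theorem \ref{SWC_FPP}, $(ii)\Rightarrow(i)$, to $B_X$ and invoke the equivalence ``$X$ superreflexive $\iff B_X$ SWC''; this is shorter and entirely self-contained given Section 4, at the cost of not recovering the Banach--Saks statement. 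Your $(i)\Rightarrow(iii)$ is essentially the paper's (Theorem \ref{SWC_FPP}'s own proof of that implication is Schauder--Tychonoff, which is what the paper cites directly), and your treatment of $(i)\Rightarrow(iv)$ supplies details the paper leaves implicit: the paper simply writes ``$(i)\Rightarrow(iv)$ is Maurey's theorem,'' whereas you carry out the needed reduction, showing that the closed span of a convex set $D$ f.r.\ in a bounded closed convex $C\subset X$ is finitely representable as a Banach space in $X$, hence superreflexive, so that Maurey's theorem applies to $D$. That perturbation step (from finite representability of sets to finite representability of $\overline{\spann}(D)$) is standard and your sketch of it is sound; note only that the reduction to $D$ closed is licensed by the paper's convention that the FPP is defined for closed bounded convex sets, not by extending the isometry to $\overline{D}$, since a fixed point of the extension need not lie in $D$.
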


\begin{proof}
By the previous theorem, we have that the super-(FPP for affine isometries) implies the Banach-Saks property. Since super-reflexivity is equivalent to the super-Banach-Saks property, we obtain $(ii)\implies (i)$. We have that $(i)\implies (iii)$ by Schauder-Tychonoff theorem and $(iii)\implies (ii)$ is obvious. $(i)\implies (iv)$ is Maurey's theorem. Since $(iv)\implies (ii)$ is obvious, the proof is complete.
\end{proof}

\textbf{Acknowledgements:} 
The first named author is also very grateful to G. Lancien and G. Godefroy for fruitful conversations.

The authors were supported by grants of Ministerio de Ciencia e Inovaci\'on PID2021-122126NB-C32 and Fundación Séneca - ACyT Región de Murcia project 21955/PI/22. The first named author was also supported by MICINN 2018 FPI fellowship with reference PRE2018-083703, associated to grant MTM2017-83262-C2-2-P.

\end{document}